\documentclass[10pt]{article}

\usepackage[utf8]{inputenc}
\usepackage[T1]{fontenc}

\usepackage{epsf}
\usepackage{amsmath}

\allowdisplaybreaks

\usepackage[showframe=false]{geometry}
\usepackage{changepage}

\usepackage{epsfig}
\usepackage{amssymb}

\usepackage{amsthm}
\usepackage{setspace}
\usepackage{cite}
\usepackage{mcite}

\usepackage{algorithmic}  % This package provides an algorithmic environment fo describing algorithms.
\usepackage{algorithm}

\usepackage{shadow}
\usepackage{fancybox}
\usepackage{fancyhdr}

\usepackage{color}
\usepackage[usenames,dvipsnames,svgnames,table]{xcolor}
\newcommand{\bl}[1]{\textcolor{blue}{#1}}

\definecolor{mypurple}{rgb}{.4,.0,.5}

\usepackage[hyphens]{url}

\usepackage[colorlinks=true,
            linkcolor=black,
            urlcolor=black,
            citecolor=purple]{hyperref}

\usepackage{breakurl}
\def\y{{\bf y}}

\def\x{{\bf x}}

\def\x{{\mathbf x}}

\def\x{{\bf x}}
\def\y{{\bf y}}

\def\h{{\bf h}}

\def\be{\begin{equation}}
\def\ee{\end{equation}}
\def\ba{\left[\begin{array}}
\def\ea{\end{array}\right]}

\def\x{{\bf x}}
\def\y{{\bf y}}

\def\1{{\bf 1}}

\def\g{{\bf g}}
\def\0{{\bf 0}}

\def\erfinv{\mbox{erfinv}}
\def\erf{\mbox{erf}}
\def\erfc{\mbox{erfc}}

\def\cG{{\mathcal G}}

\def\mR{{\mathbb R}}

\def\mN{{\mathbb N}}
\def\mE{{\mathbb E}}

\def\lp{\left (}
\def\rp{\right )}

\def\y{{\bf y}}

\def\x{{\bf x}}

\def\x{{\mathbf x}}

\def\x{{\bf x}}
\def\y{{\bf y}}

\def\h{{\bf h}}

\def\be{\begin{equation}}
\def\ee{\end{equation}}
\def\ba{\left[\begin{array}}
\def\ea{\end{array}\right]}

\def\x{{\bf x}}
\def\y{{\bf y}}

\def\({\left (}
\def\){\right )}

\def\1{{\bf 1}}

\def\g{{\bf g}}
\def\0{{\bf 0}}

\def\cX{{\mathcal X}}

\def\cS{{\mathcal S}}
\def\cL{{\mathcal L}}
\def\cI{{\mathcal I}}

\def\cX{{\mathcal X}}

\def\erfinv{\mbox{erfinv}}

\usepackage{xcolor}
\usepackage{color}

\definecolor{darkgreen}{rgb}{0, 0.4,0}

\definecolor{purplebrown}{rgb}{0.5,0.1,0.6}

\definecolor{ultclupcol}{rgb}{0.1,0.5,0.5}

\definecolor{mytrycolor}{rgb}{0.5,0.7,0.2}

\definecolor{ultclupcola}{rgb}{.5,0,.5}

\definecolor{shadebrown}{rgb}{0.1,0.1,0.9}
\definecolor{lightblue}{rgb}{0.2,0,1}

\usepackage{fancybox}
\usepackage{graphicx}
\usepackage{epstopdf}
\usepackage{epsfig}
\usepackage{wrapfig}
\usepackage{subfigure}

\usepackage{xcolor}
\usepackage{tcolorbox}
\newtcbox{\xmybox}{on line,
arc=7pt,
before upper={\rule[-3pt]{0pt}{10pt}},boxrule=0pt,
boxsep=0pt,left=6pt,right=6pt,top=0pt,bottom=0pt,enhanced, coltext=blue, colback=white!10!yellow}

\newtcbox{\xmyboxa}{on line,
arc=7pt,
before upper={\rule[-3pt]{0pt}{10pt}},boxrule=0pt,
boxsep=0pt,left=6pt,right=6pt,top=0pt,bottom=0pt,enhanced, colback=white!10!yellow}

\newtcbox{\xmyboxb}{on line,
arc=7pt,
before upper={\rule[-3pt]{0pt}{10pt}},boxrule=1pt,colframe=darkgreen!100!blue,
boxsep=0pt,left=6pt,right=6pt,top=0pt,bottom=0pt,enhanced, colback=white!10!yellow}

\newtcbox{\xmyboxc}{on line,
arc=7pt,
before upper={\rule[-3pt]{0pt}{10pt}},boxrule=.7pt,colframe=blue!100!blue,
boxsep=0pt,left=6pt,right=6pt,top=0pt,bottom=0pt,enhanced, coltext=blue, colback=white!10!yellow}

\newtcbox{\xmytboxa}{on line,
arc=7pt,
before upper={\rule[-3pt]{0pt}{10pt}},boxrule=.0pt,colframe=pink!50!yellow,
boxsep=0pt,left=6pt,right=6pt,top=0pt,bottom=0pt,enhanced, coltext=white, colback=blue!40!red}

\newtcbox{\xmytboxb}{on line,
arc=7pt,
before upper={\rule[-3pt]{0pt}{10pt}},boxrule=.0pt,colframe=pink!50!yellow,
boxsep=0pt,left=6pt,right=6pt,top=0pt,bottom=0pt,enhanced, coltext=white, colback=white!40!green}

\makeatletter
\newcommand\subsubsubsection{\@startsection{paragraph}{4}{\z@}{-2.5ex\@plus -1ex \@minus -.25ex}{1.25ex \@plus .25ex}{\normalfont\normalsize\bfseries}}
\newcommand\subsubsubsubsection{\@startsection{subparagraph}{5}{\z@}{-2.5ex\@plus -1ex \@minus -.25ex}{1.25ex \@plus .25ex}{\normalfont\normalsize\bfseries}}
\makeatother

\newtheorem{theorem}{Theorem}

\newtheorem{corollary}{Corollary}

\newtheorem{remark}{Remark}

\begin{document}

\begin{singlespace}

\title {RDT based upper bounds on the largest average submatrix values
%\footnote{ This work was supported in
%part.}
}
\author{
\textsc{Mihailo Stojnic
\footnote{e-mail: {\tt flatoyer@gmail.com}}
}}
\date{}
\maketitle

%%%%%%%%%%%%%%%%%%%%%%%%%%%%%%%%%%%%%%%%%%%%%%%%%%%%%%%%%%%%%%%%%%%%%%%%%%%%%%%%
%%%%%%%%%%%%%%%%%%%%%%%%%%%%%%%%%%%%%%%%%%%%%%%%%%%%%%%%%%%%%%%%%%%%%%%%%%%%%%%%
\centerline{{\bf Abstract}} \vspace*{0.1in}
%%%%%%%%%%%%%%%%%%%%%%%%%%%%%%%%%%%%%%%%%%%%%%%%%%%%%%%%%%%%%%%%%%%%%%%%%%%%%%%%
%%%%%%%%%%%%%%%%%%%%%%%%%%%%%%%%%%%%%%%%%%%%%%%%%%%%%%%%%%%%%%%%%%%%%%%%%%%%%%%%

We study statistical variants of the classical largest average submatrix problem. For small submatrices (where the dimension is less than linearly proportional to the original matrix), the problem is typically well understood and believed to exhibit the statistical-computational gap (SCG). However, analytical treatment of both the information-theoretic and algorithmic aspects of the linear regime remains challenging, and no mathematically rigorous results have yet arrived anywhere close to proving or disproving SCG existence in this setting.

Focusing on the linear regime, we make strong progress in several key directions: 1) We develop a generic Random Duality Theory (RDT) framework to characterize largest average submatrix values.
2)  Using the plain RDT variant, we obtain closed-form upper bounds as explicit functions of dimensionality proportionality parameters. 3) We demonstrate that a lifted RDT variant strictly improves upon the plain RDT within a certain linear range of submatrix dimensions. 4)  For small submatrices where the dimensional proportionality constants approach zero, we prove that our results match both the replica results from \cite{ErbaKOZ24} (obtained via one-step replica symmetry breaking) and the sublinear results from \cite{BhamidiDN17, GamarnikLi18}.

\vspace*{0.25in} \noindent {\bf Index Terms: Largest average submatrix; Random duality theory (RDT)}.

\end{singlespace}

%%%%%%%%%%%%%%%%%%%%%%%%%%%%%%%%%%%%%%%%%%%%%%%%%%%%%%%%%%%%%%%%%
%%%%%%%%%%%%%%%%%%%%%%%%%%%%%%%%%%%%%%%%%%%%%%%%%%%%%%%%%%%%%%%%%
%%%%%%%%%%%%%%%%%%%%%%%%%%%%%%%%%%%%%%%%%%%%%%%%%%%%%%%%%%%%%%%%%
%%%%%%%%%%%%%%%%%%%%%%%%%%%%%%%%%%%%%%%%%%%%%%%%%%%%%%%%%%%%%%%%%
%%%%%%%%%%%%%%%%%%%%%%%%%%%%%%%%%%%%%%%%%%%%%%%%%%%%%%%%%%%%%%%%%
%%%%%%%%%%%%%%%%%%%%%%%%%%%%%%%%%%%%%%%%%%%%%%%%%%%%%%%%%%%%%%%%%
\section{Introduction}
\label{sec:back}
%%%%%%%%%%%%%%%%%%%%%%%%%%%%%%%%%%%%%%%%%%%%%%%%%%%%%%%%%%%%%%%%%
%%%%%%%%%%%%%%%%%%%%%%%%%%%%%%%%%%%%%%%%%%%%%%%%%%%%%%%%%%%%%%%%%
%%%%%%%%%%%%%%%%%%%%%%%%%%%%%%%%%%%%%%%%%%%%%%%%%%%%%%%%%%%%%%%%%
%%%%%%%%%%%%%%%%%%%%%%%%%%%%%%%%%%%%%%%%%%%%%%%%%%%%%%%%%%%%%%%%%
%%%%%%%%%%%%%%%%%%%%%%%%%%%%%%%%%%%%%%%%%%%%%%%%%%%%%%%%%%%%%%%%%

Classical notions of computational hardness typically rely on the analysis of worst-case problem instances. In many cases, problems known to be practically hard to solve within worst-case complexity theory are proven to be NP-hard. While correct within the worst-case domain, such a complexity assessment often fails to properly reflect typical algorithmic solvability. Adopting a statistical approach—where typical problem instances are assumed to be probabilistically generated—usually provides a viable alternative that allows for a more adequate capture of algorithmic features.

 Such an approach has flourished over the last couple of decades. Many famous problems have been studied, and excellent results regarding their information theoretic and algorithmic complexity characterizations have been achieved. Examples include: Various types of random perceptrons, asymmetric \cite{KraMez89,GongHLS26,BrZech06,KimRoc98,Bald15,DingSun19,NakSun23,BoltNakSunXu22,Huang24,Gar88,GarDer88,BarbAKZ23,Stojnicalgbp25} and symmetric binary perceptrons \cite{AubPerZde19,PerkXu21, AbbLiSly21b,Stojnicalgsbp26,ElAlGam24,BMPZ23,BanSpen20,GamKizPerXu22,Bald20,Spen85}; 
 Positive \cite{Wendel62,Cover65,Gar88,GarDer88,SchTir03,StojnicGardGen13,Talbook11a,Talbook11b} and negative spherical perceptrons \cite{Gar88,GarDer88,AlaSel20,AMZ24, StojnicGardSphNeg13,FPSUZ17,FraPar16,BMPZ23}; A host of statistical physics models, such as SK and spherical pure or mixed $p$-spin models \cite{GamJag21,Dandietal25,Montanari19,AlaouiMS21}, as well as Hopfield models \cite{Stojniccluphop25}; Many graph-related structures, such as the famous Max-Cut \cite{CGPR19}, along with a plethora of others \cite{GamarSud17,GamAW24,Wein22,GamKW25};
Satisfiability problems \cite{GamarSud17a,MezardPZ02,DinSlySun15}
and numerous other examples, including planted problems (for a non-exhaustive list, see, e.g., \cite{Wein22,HuangS22a,HuangSell24,LiSch24,Gamar21,MMZ05, AchlioptasR06,GamMZ22,BarbierKMMZ18})

In this paper, we consider another classical problem from this group: the so-called largest average submatrix problem (LASP) \cite{ShabalinWPN09,MadOli04,Fort10,PonGAR15,GamarnikLi18,BhamidiDN17}. For a given $m\times n$ matrix $A\in\mR^{m\times n}$, it amounts to finding the submatrix of size $k_1\times k_2$ with the largest average value of its entries $\xi$ \cite{BhaGG26,HegKiz25,ErbaKOZ26,ErbaKOZ24,GamarnikLi18,BhamidiDN17,GamarnikJS21} (see also, e.g., \cite{BrennanBH19,BanksMVVX18,DadonHB24,MaWu15,OrenJBT26,HajekWX18,SohnWein25,SunNobel13} for closely related planted submatrix variants as well as, e.g., \cite{GamKW25,Wein22,GamAW24} for the so-called maximum independent sets problems).

We consider a standard statistical proportional regime (where $m, n, k_1,$ and $k_2$ are all large and proportional to each other) and utilize Random Duality Theory (RDT) to develop a generic framework for analyzing these problems, ultimately determining the upper bounds on $\xi$. To discuss the associated intricacies and relevant prior work in detail, we introduce several technical preliminaries below.

%%%%%%%%%%%%%%%%%%%%%%%%%%%%%%%%%%%%%%%%%%%%%%%%%%%%%%%%%%%%%%%%%
%%%%%%%%%%%%%%%%%%%%%%%%%%%%%%%%%%%%%%%%%%%%%%%%%%%%%%%%%%%%%%%%%
\section{Technical preliminaries, prior work, and contributions}
\label{sec:sampcov}
%%%%%%%%%%%%%%%%%%%%%%%%%%%%%%%%%%%%%%%%%%%%%%%%%%%%%%%%%%%%%%%%%
%%%%%%%%%%%%%%%%%%%%%%%%%%%%%%%%%%%%%%%%%%%%%%%%%%%%%%%%%%%%%%%%%

We will start by introducing precise definitions of the objects that will be the key components of our study.

%%%%%%%%%%%%%%%%%%%%%%%%%%%%%%%%%%%%%%%%%%%%%%%%%%%%%%%%%%%%%%%%%
%%%%%%%%%%%%%%%%%%%%%%%%%%%%%%%%%%%%%%%%%%%%%%%%%%%%%%%%%%%%%%%%%
\subsection{Mathematical setup}
\label{sec:context}
%%%%%%%%%%%%%%%%%%%%%%%%%%%%%%%%%%%%%%%%%%%%%%%%%%%%%%%%%%%%%%%%%
%%%%%%%%%%%%%%%%%%%%%%%%%%%%%%%%%%%%%%%%%%%%%%%%%%%%%%%%%%%%%%%%%

Let $n, m, k_1, k_2 \in \mathbb{N}$ be four positive integers. As stated above, assuming a large $n$, we operate in a linear/proportional dimensional context such that:
\begin{equation}\label{eq:amat0a0}
 \alpha = \lim_{n\rightarrow \infty} \frac{m}{n}, \quad  
  \beta_1 = \lim_{n\rightarrow \infty} \frac{k_1}{n}, \quad 
   \beta_2 = \lim_{n\rightarrow \infty} \frac{k_2}{n}.
 \end{equation}
Setting let $[n] = \{1, 2, \dots, n \}$, for a matrix $A \in \mathbb{R}^{m \times n}$, we are interested in finding its largest average submatrix of size $k_1 \times k_2$ which is   
 \begin{equation}\label{eq:kronrep1}
\xi  = \frac{1}{k_1k_2}\max_{\substack{\cI_1\subset [m],\cI_2\subset [n]\\|\cI_1|=k_1,|\cI_2|=k_2}} \sum_{i\in \cI_1,j\in \cI_2} A_{ij} . 
 \end{equation}
 Moreover, the pair $(\bar{\cI}_1,\bar{\cI}_2)$ defined as
\begin{equation}\label{eq:kronrep1a1}
(\bar{\cI}_1,\bar{\cI}_2)  = \frac{1}{k_1k_2}\mbox{arg} \max_{\substack{\cI_1\subset [m],\cI_2\subset [n]\\|\cI_1|=k_1,|\cI_2|=k_2}} \sum_{i\in \cI_1,j\in \cI_2} A_{ij}, 
 \end{equation}
determines the location of such a submatrix.
 
Similarly to any other optimization problem, two key features of (\ref{eq:kronrep1}) are of interest: (i) the optimal value of the objective ($\xi$), and (ii) the optimal values of the optimizing variables $\cI_1$ and $\cI_2$, given by the pair $(\bar{\cI}_1, \bar{\cI}_2)$. The former is typically associated with the information-theoretic properties of the problem, whereas the latter relates to its algorithmic properties.

In this paper, we focus on the statistical analysis and assume a probabilistic framework. Specifically, we assume that the elements of $A$ are independent standard normal random variables. As noted earlier, a statistical context allows for a more appropriate assessment of the relationship between these two features.

%%%%%%%%%%%%%%%%%%%%%%%%%%%%%%%%%%%%%%%%%%%%%%%%%%%%%%%%%%%%%%%%%%%%%%%%%%%%%%%%%%%%%%%%%%
%%%%%%%%%%%%%%%%%%%%%%%%%%%%%%%%%%%%%%%%%%%%%%%%%%%%%%%%%%%%%%%%%%%%%%%%%%%%%%%%%%%%%%%%%%
%%%%%%%%%%%%%%%%%%%%%%%%%%%%%%%%%%%%%%%%%%%%%%%%%%%%%%%%%%%%%%%%%%%%%%%%%%%%%%%%%%%%%%%%%%
\subsection{Relevant prior work}
\label{sec:comtext}
%%%%%%%%%%%%%%%%%%%%%%%%%%%%%%%%%%%%%%%%%%%%%%%%%%%%%%%%%%%%%%%%%%%%%%%%%%%%%%%%%%%%%%%%%%
%%%%%%%%%%%%%%%%%%%%%%%%%%%%%%%%%%%%%%%%%%%%%%%%%%%%%%%%%%%%%%%%%%%%%%%%%%%%%%%%%%%%%%%%%%
%%%%%%%%%%%%%%%%%%%%%%%%%%%%%%%%%%%%%%%%%%%%%%%%%%%%%%%%%%%%%%%%%%%%%%%%%%%%%%%%%%%%%%%%%%

Following studies on biological and social network applications \cite{ShabalinWPN09,MadOli04,Fort10,PonGAR15} that demonstrated the need for efficient solvers of statistical LASP (or biclustering \cite{MadOli04}), the study of their theoretical and algorithmic properties has flourished over the last decade. Excellent progress has been achieved in many directions, and we briefly survey the milestones most closely related to our work below.

Most prior work focuses on square scenarios. While some situations require these assumptions to facilitate technical analysis, others use them simply to simplify the presentation. To avoid repeating these conditions, we note that the square context generically assumes $n=m$ and $k_1=k_2=k$. Additionally, because the technical challenges differ substantially depending on whether one operates in linear or sublinear regimes, we address each regime and its combinations separately.

\vspace{.1in} 

\noindent \underline{\textbf{\emph{Sublinear regime:}}} In \cite{BhamidiDN17}, the square sublinear regime was considered with $k=O(\log(n))$ to obtain the precise estimate $\xi = 2\sqrt{\log(n)/k}$. Moreover, it was suggested that a simple Largest Average Submatrix (LAS) algorithm from \cite{ShabalinWPN09} should achieve $\xi_{LAS} = \sqrt{2\log(n)/k}$.

\cite{GamarnikLi18} revisited the sublinear regime (with a constant $k$) and confirmed that $\xi_{LAS} = \sqrt{2\log(n)/k}$ is correct. Utilizing a parallel with finding cliques, \cite{GamarnikLi18} first extended the range to $k=o(\log(n))$. They then introduced an Incremental Greedy Procedure (IGP) and showed that it achieves $\xi_{IGP} = \frac{4}{3}\sqrt{2\log(n)/k}$ for $k=o(\log^2(n))$. They also established an Overlap Gap Property (OGP) threshold of $\xi_{OGP} \le \frac{\sqrt{16+2/3}}{3}\sqrt{2\log(n)/k}$.

Drawing parallels with similar graph and satisfiability studies \cite{GamarSud17,Mont15,AchlioptasR06}, \cite{GamarnikLi18} further conjectured that $\xi_{OGP}$ might be related to the algorithmic threshold—a critical value beyond which no polynomial-time algorithm exists. For overviews of OGP's role in algorithmic hardness and statistical-computational gaps, see \cite{GamMZ22,Gamar21}. For the direct implications that various types of OGPs (including multi-, star-, ladder-, and branching-OGPs) have on many well-known problems, see \cite{Montanari19,AlaouiMS21,HuangS22a,HuangSell24,RahVir17,GamarSud17a,GamKizPerXu22,Wein22,BreHuang21}.

\vspace{.1in} 

\noindent \underline{\textbf{\emph{Linear regime:}}} In \cite{ErbaKOZ24}, the authors considered the square linear regime with $k=\beta n$ (noting that the results easily extend to the non-square regime as well). Utilizing statistical physics replica methods, they obtained a wide set of results. For example, they predicted a complete phase transition diagram and determined the maximum average submatrix (MAS) values under both replica symmetry and one-step symmetry-breaking ansatzes.

Moreover, in the $\beta\rightarrow 0$ regime—which is small-dimensional, yet still linearly proportional to the dimension of the original matrix $A$—the appropriately scaled MAS value is estimated as:

$$\lim_{n\rightarrow \infty}\xi\sqrt{n} = 2 \sqrt{\frac{1}{\beta} \log \lp \frac{1}{\beta}\rp }$$

This is believed to be the correct $\beta$ scaling order, and the associated constant 2 is expected to be exact. In fact, extrapolating to the sublinear regimes mentioned in \cite{BhamidiDN17,GamarnikLi18} yields this exact result. Furthermore, \cite{ErbaKOZ24} studied dynamical properties and reached conclusions that align well with the rigorous results in \cite{BhamidiDN17,GamarnikLi18}, despite the latter being established in sublinear regimes.

\vspace{.1in} 

\noindent \underline{\textbf{\emph{Various dimensions and tensors:}}} The linear regime is studied in \cite{HegKiz25} within a tensor context. Assuming a large tensor order $p$, the ground state of the model, $E_{max}$ (corresponding to appropriately scaled $\xi$), is determined. Moving to the sublinear regime $k=o(\log^{1.5}(n))$, \cite{HegKiz25} analyzed the IGP-T algorithm (a tensor generalization of IGP) and showed that it approaches the ground state $\frac{2\sqrt{p}}{p+1}E_{max}$. Furthermore, using a multi-OGP analysis, \cite{HegKiz25} established the presence of a multi-OGP above $\gamma_p E_{max}$, where $\gamma_p \rightarrow \infty$ as $p \rightarrow \infty$. Subsequently, \cite{BhaGG26} reproved the IGP-T performance for $k=o(n)$ and any $p$, and utilized branching-OGP to show that the gap appears above $\frac{2\sqrt{p}}{p+1} E_{max}$. This matches the IGP-T threshold and implies the potential existence of a statistical-computational gap.

Largest average subtensor problems were also considered in \cite{ErbaKOZ26} using replica methods under a $k \ll n$ regime. The primary takeaway is that the matrix conclusions for $p=2$ from \cite{ErbaKOZ24} largely extend to tensors of any order $p > 2$. This applies specifically to the overall structure of the underlying phase diagram and the presence of RS and frozen 1-RSB phases as the largest average submatrix value increases. The performance of LAS and IGP was also analyzed: IGP aligns with the rigorous results of \cite{BhaGG26}, while LAS performance matches the appearance of the frozen 1-RSB phase.

\vspace{.1in} 

\noindent \underline{\textbf{\emph{Planted models:}}}  Closely related to the LASP are planted models, where a specific structure is expected to be hidden a priori within a noisy matrix. Examples include general spiked models \cite{BBP05,PerryWB20,BehneReeves22,LelargeMio18,PakKK23,Lesetal17} and elevated mean models \cite{ButIng13,HajekWX18,BanksMVVX18,CaiLR17}. While LASP focuses on distinguishing the largest average submatrix, planted models aim to infer the hidden structure. These models allow for the study of various topics, including the information-theoretic and algorithmic aspects of partial, approximate, or complete recovery, as well as associated hypothesis testing \cite{LelargeMio18,PakKK23,Lesetal17,MontRich14,DonGavJohn18,BrennanBH19,DadonHB24,MaWu15,OrenJBT26,HajekWX18,SohnWein25,SunNobel13}.

An excellent set of results, particularly related to the linear regime, is obtained in \cite{GamarnikJS21}  (see also, e.g., \cite{CaiLR17,DeshMont14,BarbierMKLZ16,LelargeMio18,KolarNRS11}). \cite{GamarnikJS21}  considers a particular variant of planted LASP that searches for principal submatrices (a symmetrized version of elevated mean submatrix localization). While the remaining problem is still very challenging, restricting the focus to principal submatrices imposes symmetry, which allows for the utilization of established spin glass techniques. As a result, \cite{GamarnikJS21}  achieves a rigorous, precise characterization of $\lim_{n\rightarrow \infty}\xi_{PRIN}\sqrt{n}$.

%%%%%%%%%%%%%%%%%%%%%%%%%%%%%%%%%%%%%%%%%%%%%%%%%%%%%%%%%%%%%%%%%%%%%%%%%%%%%%%%%%%%%%%%%%
%%%%%%%%%%%%%%%%%%%%%%%%%%%%%%%%%%%%%%%%%%%%%%%%%%%%%%%%%%%%%%%%%%%%%%%%%%%%%%%%%%%%%%%%%%
%%%%%%%%%%%%%%%%%%%%%%%%%%%%%%%%%%%%%%%%%%%%%%%%%%%%%%%%%%%%%%%%%%%%%%%%%%%%%%%%%%%%%%%%%%
\subsection{Our contributions}
\label{sec:contr}
%%%%%%%%%%%%%%%%%%%%%%%%%%%%%%%%%%%%%%%%%%%%%%%%%%%%%%%%%%%%%%%%%%%%%%%%%%%%%%%%%%%%%%%%%%
%%%%%%%%%%%%%%%%%%%%%%%%%%%%%%%%%%%%%%%%%%%%%%%%%%%%%%%%%%%%%%%%%%%%%%%%%%%%%%%%%%%%%%%%%%
%%%%%%%%%%%%%%%%%%%%%%%%%%%%%%%%%%%%%%%%%%%%%%%%%%%%%%%%%%%%%%%%%%%%%%%%%%%%%%%%%%%%%%%%%%

As mentioned above, our focus is on non-square statistical scenarios and linearly proportional matrix/submatrix dimensions. More concretely, we consider a matrix $A$ of size $m\times n$ comprised of independent standard normal entries and examine $\mathbb{E} \,\xi\sqrt{n}$—the $\sqrt{n}$-scaled maximum average value of $A$'s submatrices of size $k_1\times k_2$. We assume that $m, n, k_1,$ and $k_2$ satisfy (\ref{eq:amat0a0}).

Below is a brief highlight of some of the main results.

\begin{itemize}
\item Framework Development: We developed a generic framework based on Random Duality Theory (RDT) to characterize largest average submatrix values. This powerful framework accommodates both square and non-square matrices and submatrices, extending beyond the square scenarios prevalent in prior literature.
 
\item Plain RDT Upper-Bounds: Using the plain RDT framework, we obtained explicit, closed-form upper bounds on the maximum average submatrix. These bounds are easy to compute and clearly demonstrate their dependence on key dimensionality-related parameters.
 
\item  Lifted RDT Variant: We introduced a lifted RDT variant that strictly improves upon the plain RDT within a portion of the $\beta\in(0,1)$ range. For $\beta\rightarrow 0$ the improvement is characterized precisely by a factor $\sqrt{2}$.
    
\item  Small Submatrices \& Asymptotic Matching: For small submatrices ($\beta\rightarrow 0$), our results match the replica results from \cite{ErbaKOZ24} obtained via one-step symmetry breaking ansatz. Furthermore, we confirmed that our findings extrapolate to match the sublinear regime results from \cite{BhamidiDN17, GamarnikLi18}. 
       
    \item Algorithmic Implementation:  Finally, the MAS values obtained via practical LAS algorithm from \cite{ShabalinWPN09} are observed to closely approach our theoretical predictions, even for relatively small dimensions ($\sim 1000$). In certain $\beta$ regimes, the difference is $\sim 0.01\%$, suggesting that any statistical-computational gap is either absent or minimal across a sizeable portion of the $\beta\in (0,1)$ range.
\end{itemize}

%%%%%%%%%%%%%%%%%%%%%%%%%%%%%%%%%%%%%%%%%%%%%%%%%%%%%%%%%%%%%%%%%%%%%%%%%%%%%%%%%%%%%%%%%%
%%%%%%%%%%%%%%%%%%%%%%%%%%%%%%%%%%%%%%%%%%%%%%%%%%%%%%%%%%%%%%%%%%%%%%%%%%%%%%%%%%%%%%%%%%
%%%%%%%%%%%%%%%%%%%%%%%%%%%%%%%%%%%%%%%%%%%%%%%%%%%%%%%%%%%%%%%%%%%%%%%%%%%%%%%%%%%%%%%%%%
\section{Upper-bounding $\xi$ via RDT}
\label{sec:xirdt}
%%%%%%%%%%%%%%%%%%%%%%%%%%%%%%%%%%%%%%%%%%%%%%%%%%%%%%%%%%%%%%%%%%%%%%%%%%%%%%%%%%%%%%%%%%
%%%%%%%%%%%%%%%%%%%%%%%%%%%%%%%%%%%%%%%%%%%%%%%%%%%%%%%%%%%%%%%%%%%%%%%%%%%%%%%%%%%%%%%%%%
%%%%%%%%%%%%%%%%%%%%%%%%%%%%%%%%%%%%%%%%%%%%%%%%%%%%%%%%%%%%%%%%%%%%%%%%%%%%%%%%%%%%%%%%%%

We first recall the following  four key RDT principles \cite{StojnicCSetam09,StojnicICASSP10var,StojnicISIT2010binary,StojnicRegRndDlt10} (for more on further upgrades and associated algorithmic implications, see, e.g., \cite{Stojnicalgbp25,Stojnicclupsk25,Stojnicalgsbp26,StojnicNN27,Stojnictcmspnncaprdt23,Stojnictcmspnncapliftedrdt23}):

\begin{enumerate}

\item \emph{Finding underlying optimization algebraic representation}

\item \emph{Determining random dual} 

\item \emph{Handling random dual}

\item \emph{Double-checking strong random duality.}

\end{enumerate}
All four principles, together with their relations to the problem of our interest here, are discussed in detail below.

%%%%%%%%%%%%%%%%%%%%%%%%%%%%%%%%%%%%%%%%%%%%%%%%%%%%%%%%%%%%%%%%%%%%%%%%%%%%%%%%%%%%%%%%%%%%%%%%%%%%
%%%%%%%%%%%%%%%%%%%%%%%%%%%%%%%%%%%%%%%%%%%%%%%%%%%%%%%%%%%%%%%%%%%%%%%%%%%%%%%%%%%%%%%%%%%%%%%%%%%%
\subsection{Finding underlying optimization algebraic representation} 
\label{sec:randpr}
%%%%%%%%%%%%%%%%%%%%%%%%%%%%%%%%%%%%%%%%%%%%%%%%%%%%%%%%%%%%%%%%%%%%%%%%%%%%%%%%%%%%%%%%%%%%%%%%%%%%
%%%%%%%%%%%%%%%%%%%%%%%%%%%%%%%%%%%%%%%%%%%%%%%%%%%%%%%%%%%%%%%%%%%%%%%%%%%%%%%%%%%%%%%%%%%%%%%%%%%%

We start by defining 
\begin{eqnarray}\label{eq:amat0a1}
\cS_{\x} & \triangleq & \left \{\x\in\mR^n \hspace{.05in} |  \hspace{.05in} \|\x\|_2=1, \x_i\in \left \{0,\frac{1}{\sqrt{k_2}} \right \},i=1,\dots,n \right \}
\nonumber \\
& = & 
\left \{\x\in\mR^n \hspace{.05in} |  \hspace{.05in} \x_i\in \left \{0,\frac{1}{\sqrt{k_2}} \right \},i=1,\dots,n, \sum_{i=1}^{n}\x_i=\sqrt{k_2} \right \},\quad \mbox{ and }
\nonumber \\
\cS_{\y} & \triangleq &\left \{\y\in\mR^m \hspace{.05in} |  \hspace{.05in} \|\y\|_2=1, \y_i\in \left \{0,\frac{1}{\sqrt{k_1}} \right \},i=1,\dots,m  \right \} 
\nonumber \\
& = & 
\left \{\y\in\mR^m \hspace{.05in} |  \hspace{.05in} \y_i\in \left \{0,\frac{1}{\sqrt{k_1}} \right \},i=1,\dots,m, \sum_{i=1}^{m}\y_i=\sqrt{k_1} \right \}
.
 \end{eqnarray}
Then we have for the optimal objective from (\ref{eq:kronrep1}) 
\begin{equation}\label{eq:algb1}
\xi  = \frac{1}{\sqrt{k_1k_2}}\max_{\x\in\cS_{\x},\y\in\cS_{\y}} \y^T A \x, 
 \end{equation}
and  for the $(\bar{\x},\bar{\y})  $ analogue of the optimizing pair from (\ref{eq:kronrep1a1})
\begin{equation}\label{eq:algb2}
(\bar{\x},\bar{\y})  = \frac{1}{\sqrt{k_1k_2}}\mbox{argmax}_{\x\in\cS_{\x},\y\in\cS_{\y}} \y^T A \x. 
 \end{equation}
 The optimization problem in (\ref{eq:algb1}) is called \emph{random primal} within the RDT.

%%%%%%%%%%%%%%%%%%%%%%%%%%%%%%%%%%%%%%%%%%%%%%%%%%%%%%%%%%%%%%%%%%%%%%%%%%%%%%%%%%%%%%%%%%%%%%%%%%%%
%%%%%%%%%%%%%%%%%%%%%%%%%%%%%%%%%%%%%%%%%%%%%%%%%%%%%%%%%%%%%%%%%%%%%%%%%%%%%%%%%%%%%%%%%%%%%%%%%%%%
\subsection{Determining random dual} 
\label{sec:randdual}
%%%%%%%%%%%%%%%%%%%%%%%%%%%%%%%%%%%%%%%%%%%%%%%%%%%%%%%%%%%%%%%%%%%%%%%%%%%%%%%%%%%%%%%%%%%%%%%%%%%%
%%%%%%%%%%%%%%%%%%%%%%%%%%%%%%%%%%%%%%%%%%%%%%%%%%%%%%%%%%%%%%%%%%%%%%%%%%%%%%%%%%%%%%%%%%%%%%%%%%%%

The above random primal is complemented in the following theorem with the so-called \emph{random dual}.

\begin{theorem}
\label{thm:thm1}
Consider large dimensional proportional regime with $n,m,k_1,k_2\in\mN$ as in (\ref{eq:amat0a0}). Let each element of $\h\in\mR^m$ and $\g\in\mR^n$ be standard normal random variable independent of any other randomness. Set
\begin{eqnarray}
   \label{eq:thm1eq1}
 L & = &  
\frac{1}{\sqrt{k_1 k_2}} \max_{\x\in\cS_{\x},\y\in\cS_{\y}}  \lp \h^T\y +\g^T\x \rp  .
\end{eqnarray}
For $A\in\mR^{m\times n}$  comprised of independent standard normal entries, let $\xi$ be as in (\ref{eq:algb1}).  One then has  
\begin{eqnarray}
   \label{eq:thm1eq2}
\lim_{n\rightarrow \infty}    \mE \xi\sqrt{n}   \leq  \lim_{n\rightarrow \infty}   \mE L \sqrt{n},
\end{eqnarray}
with the righthand side being the random dual.
\end{theorem}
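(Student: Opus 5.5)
The natural route is the Slepian/Sudakov--Fernique comparison, which is the standard mechanism behind RDT random duals. Since $\|\x\|_2=\|\y\|_2=1$ on $\cS_{\x}$ and $\cS_{\y}$, we compare two centered Gaussian processes indexed by $(\x,\y)\in\cS_{\x}\times\cS_{\y}$:
\begin{equation*}
X_{\x,\y}=\y^TA\x, \qquad Y_{\x,\y}=\h^T\y+\g^T\x .
\end{equation*}
Both are finite-index processes, so no measurability or separability issues arise. The claim then reduces to $\mE\max X\le \mE\max Y$ for each finite $n$. Multiplying by $\sqrt{n}/\sqrt{k_1k_2}$ and passing to the limit, assuming the limits exist or reading them as limsup, gives (\ref{eq:thm1eq2}).

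The key step is checking the increment condition of Sudakov--Fernique,
\begin{equation*}
\mE\bigl(X_{\x,\y}-X_{\bar{\x},\bar{\y}}\bigr)^2\le \mE\bigl(Y_{\x,\y}-Y_{\bar{\x},\bar{\y}}\bigr)^2,
\end{equation*}
for all pairs of index points. Using unit norms, $\mE X_{\x,\y}X_{\bar{\x},\bar{\y}}=(\x^T\bar{\x})(\y^T\bar{\y})$, so the left-hand side is $2-2(\x^T\bar{\x})(\y^T\bar{\y})$. The right-hand side is $2-2\x^T\bar{\x}+2-2\y^T\bar{\y}$, because both variances equal 2. Writing $a=\x^T\bar{\x}$ and $b=\y^T\bar{\y}$, which lie in $[0,1]$ since the entries are nonnegative (and in $[-1,1]$ by Cauchy--Schwarz regardless), the difference of the right- and left-hand sides is
\begin{equation*}
2-2a-2b+2ab=2(1-a)(1-b)\ge 0 .
\end{equation*}
The inequality therefore holds for every index pair, and Sudakov--Fernique gives $\mE\max_{\x,\y}\y^TA\x\le \mE\max_{\x,\y}(\h^T\y+\g^T\x)$ without any extra term.

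The only subtlety I expect is bookkeeping. The variances differ (1 versus 2), so I will use the increment form of Sudakov--Fernique rather than Slepian's lemma, which requires equal variances. Alternatively, one could use Gordon's comparison after adding an independent term $\|\x\|_2\|\y\|_2 z$ with $z$ standard normal to $X$; this adds zero in expectation and equalizes the variances. For the limit statement, I will note that $\mE\xi\sqrt{n}$ and $\mE L\sqrt{n}$ are uniformly bounded for proportional dimensions, for example by a union bound. Applying the finite-$n$ inequality along any subsequence then yields the claimed inequality for the limits, or for $\limsup$ versus $\liminf$ taken appropriately.
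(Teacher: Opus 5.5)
Your proposal is correct and is essentially the paper's argument: the paper computes the same covariance gap $\bigl(1-(\y^{(1)})^T\y^{(2)}\bigr)\bigl(1-(\x^{(2)})^T\x^{(1)}\bigr)\ge 0$ and applies Slepian's lemma (Gordon's Theorem 1.1) after adding an independent standard normal $g$ to $\y^TA\x$, which makes both variances equal to $2$ --- exactly the alternative you mention. Your direct use of the increment form of Sudakov--Fernique is equivalent, since $g$ cancels in increments, and it just avoids the auxiliary variable; your remark on reading the limits via $\limsup$ adds a little care that the paper omits.
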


\begin{proof} For a standard normal $g$ (independent of all other randomness), we consider two centered Gaussian processes indexed by an array $\cX = \{\x,\y\}$
  \begin{eqnarray}
\label{eq:mr1}
 \cG (\cX) & \triangleq &  \cG (\x,\y)  \triangleq   \y^TA\x +  g  \nonumber   \\
 \cG_u (\cX) & \triangleq &  \cG_u (\x,\y)  \triangleq    \h^T\y +\g^T\x.
  \end{eqnarray}
 For two arrays $\cX^{(1)}=\{ \x^{(1)},\y^{(1)}\}$ and $\cX^{(2)}=\{ \x^{(2)},\y^{(2)}\}$ with $ \x^{(i)} \in \cS_{\x}$ and $ \y^{(i)} \in \cS_{\y}$, $i=1,2$,  we further write
  \begin{eqnarray}
\label{eq:mr2}
\mE \cG (\cX^{(1)})\cG (\cX^{(2)})   & =  &   \lp\y^{(1)}\rp^T\y^{(2)} \lp\x^{(2)}\rp^T\x^{(1)} + 1
\nonumber   \\
\mE \cG_u (\cX^{(1)})\cG_u (\cX^{(2)})  & =  & \lp\y^{(2)}\rp^T\y^{(1)} + \lp\x^{(2)}\rp^T\x^{(1)} .
  \end{eqnarray}
From (\ref{eq:mr2}) one then finds
  \begin{align}
\label{eq:mr5}
  \mE & \cG (\cX^{(1)})\cG (\cX^{(2)})
 -
\mE \cG_u (\cX^{(1)})\cG_u (\cX^{(2)} ) =
\nonumber
\\
 & =       
 \lp\y^{(1)}\rp^T\y^{(2)} \lp\x^{(2)}\rp^T\x^{(1)} + 1
 -\lp\y^{(2)}\rp^T\y^{(1)} - \lp\x^{(2)}\rp^T\x^{(1)}
 \nonumber \\
  & =       
\lp 1 -  \lp\y^{(1)}\rp^T\y^{(2)}  \rp \lp 1 - \lp\x^{(2)}\rp^T\x^{(1)}\rp
 \nonumber \\
&  \geq   0.  
 \end{align}
One also notes 
  \begin{align}
\label{eq:mr5a0}
  \mE & \cG (\cX^{(1)})\cG (\cX^{(1)})
 -
\mE \cG_u (\cX^{(1)})\cG_u (\cX^{(1)} )
   = 
\lp 1 -  \lp\y^{(1)}\rp^T\y^{(1)}  \rp \lp 1 - \lp\x^{(1)}\rp^T\x^{(1)}\rp
  =   0,
  \end{align}
  where the last equality follows since  $\|\x^{(1)}\|_2 = \lp \x^{(1)}\rp^T \x^{(1)}=\lp \y^{(1)}\rp^T \y^{(1)}=\|\x^{(1)}\|_2 = 1$.
  
The proof is completed after one recalls Theorem 1.1 from \cite{Gordon85} (the part of the theorem that is utilized below is known as Slepian lemma and was introduced in \cite{Slep62}; both Slepian lemma and the theorem below can be deduced as special cases of results obtained in Corollary 3 in \cite{Stojnicgscompyx16}  and in Corollary 4 in \cite{Stojnicgscomp16}).

\begin{theorem}(\cite{Gordon85,Slep62})
\label{thm:Gordonpos1} Let $X_{i}$ and $Y_{i}$, $1\leq i\leq n$, be two centered Gaussian processes which satisfy the following inequalities for all choices of indices
\begin{enumerate}
\item $\mE(X_{i}^2)=\mE(Y_{i}^2)$
\item $\mE(X_{i}X_{l})\leq \mE(Y_{i}Y_{l}), i\neq l$.
\end{enumerate}
 Then
\begin{equation*}
\mE(\min_{i} X_{i})\leq \mE(\min_i Y_{i}) \quad  \Longleftrightarrow \quad \mE(\max_{i} X_{i})\geq \mE(\max_i Y_{i}).
\end{equation*}
\end{theorem}

After noting correspondence $Y\leftrightarrow\cG$ and $X\leftrightarrow\cG_u$ and
applying  Theorem \ref{thm:Gordonpos1} to processes $\cG(\cdot)$ and  $\cG_u(\cdot)$, we obtain   
\begin{align}\label{eq:mt5a1a0}
&  &\mE \max_{\cX^{(1)}} \cG(\cX)  & \leq \mE \max_{\cX^{(1)}} \cG_u(\cX)
\nonumber \\
\Longleftrightarrow & & \mE \max_{\x\in\cS_{\x},\y\in\cS_{\y}} 
 \lp \y^TA\x +  \g \rp
 & \leq \mE  \max_{\x\in\cS_{\x},\y\in\cS_{\y}}   \lp  \h^T\y + \g^T\x  \rp
\nonumber \\
\Longleftrightarrow & & \mE \max_{\x\in\cS_{\x},\y\in\cS_{\y}} 
  \y^TA\x 
 & \leq \mE  \max_{\x\in\cS_{\x},\y\in\cS_{\y}}   \lp  \h^T\y + \g^T\x  \rp
 .
 \end{align}
Connecting  (\ref{eq:algb1}) and (\ref{eq:mt5a1a0}), one then has
\begin{eqnarray}\label{eq:mt5a1a1}
 \lim_{n\rightarrow \infty}   \sqrt{n}  \mE  \xi  & \leq &   \lim_{n\rightarrow \infty} \frac{\sqrt{n}}{\sqrt{k_1k_2}} \mE  \max_{\x\in\cS_{\x},\y\in\cS_{\y}}   \lp  \h^T\y + \g^T\x  \rp
 ,
 \end{eqnarray}
which, together with (\ref{eq:thm1eq1}), gives  (\ref{eq:thm1eq2}) and completes the proof.
\end{proof}

\begin{remark}
\label{rem:rem0}
To make the presentation neater and writing easier, throughout the paper we focus on expectations. However, all key quantities trivially concentrate, and the stated results automatically extend to hold in a probabilistic sense as well. It is also worth noting that despite being stated in the $n \rightarrow \infty$ regime, the above results hold for any fixed $n, m, k_1,$ and $k_2$.
\end{remark}

%%%%%%%%%%%%%%%%%%%%%%%%%%%%%%%%%%%%%%%%%%%%%%%%%%%%%%%%%%%%%%%%%
%%%%%%%%%%%%%%%%%%%%%%%%%%%%%%%%%%%%%%%%%%%%%%%%%%%%%%%%%%%%%%%%%
\subsection{Handling random dual}
\label{sec:handlerd}
%%%%%%%%%%%%%%%%%%%%%%%%%%%%%%%%%%%%%%%%%%%%%%%%%%%%%%%%%%%%%%%%%
%%%%%%%%%%%%%%%%%%%%%%%%%%%%%%%%%%%%%%%%%%%%%%%%%%%%%%%%%%%%%%%%%

To handle the above random dual, we focus on $L$ as the key object of interest. First, we observe 
\begin{eqnarray}
   \label{eq:hrd1}
 L & \triangleq &  
\frac{1}{\sqrt{k_1 k_2}} \max_{\x\in\cS_{\x},\y\in\cS_{\y}}  \lp \h^T\y +\g^T\x \rp 
   = -  \frac{1}{\sqrt{k_1 k_2}} \min_{\x\in\cS_{\x},\y\in\cS_{\y}}  \lp -\h^T\y -\g^T\x \rp  .
\end{eqnarray}
Since we are ultimately interested in the statistical properties of $L$, we adopt a cosmetic change of variables, $\h\rightarrow -\h$ and $\g\rightarrow -\g$. Due to symmetry, $\h$ and $\g$ remain independent standard normal vectors. For $\nu_1$ and $\nu_2$, the Lagrangian is expressed as follows:
 \begin{eqnarray}
   \label{eq:hrd2}
 \cL  =  \h^T\y +\g^T\x   + \nu_1\sum_{i=1}^{m} \y_i - \nu_1\sqrt{k_1}  + \nu_2\sum_{i=1}^{n} \x_i - \nu_2\sqrt{k_2} .
\end{eqnarray}
A combination of (\ref{eq:hrd1}) and  (\ref{eq:hrd2}) together with the strong duality gives
\begin{eqnarray}
   \label{eq:hrd3}
 L  =  -  \frac{1}{\sqrt{k_1 k_2}} \min_{\x_i\in \left \{0,\frac{1}{\sqrt{k_2}} \right \},\y_i\in \left \{0,\frac{1}{\sqrt{k_1}} \right \}}  \max_{\nu_1,\nu_2} \cL
 = 
 -  \frac{1}{\sqrt{k_1 k_2}}   \max_{\nu_1,\nu_2}  \min_{\x_i\in \left \{0,\frac{1}{\sqrt{k_2}} \right \},\y_i\in \left \{0,\frac{1}{\sqrt{k_1}} \right \}} \cL.
\end{eqnarray}
After solving the inner minimization, we obtain
\begin{eqnarray}
   \label{eq:hrd4}
 L 
  & = &   
 -  \frac{1}{\sqrt{k_1 k_2}}   \max_{\nu_1,\nu_2} 
\lp
 \lp \frac{1}{\sqrt{k_1}}\sum_{i=1}^{m} \min (\h_i+\nu_1,0) \rp -\nu_1\sqrt{k_1}
 +
 \lp \frac{1}{\sqrt{k_2}}\sum_{i=1}^{n} \min (\g_i+\nu_2,0) \rp
-\nu_2\sqrt{k_2}
 \rp
 \nonumber \\
  & = &   
  \frac{1}{k_1\sqrt{ k_2}}   \min_{\nu_1} 
\lp -\sum_{i=1}^{m} \min (\h_i+\nu_1,0) +\nu_1k_1 \rp
 +
  \frac{1}{k_2\sqrt{ k_1}}   \min_{\nu_2} 
 \lp - \sum_{i=1}^{n} \min (\g_i+\nu_2,0) +\nu_2 k_2 \rp
. 
   .\end{eqnarray}
 The law of large numbers and concentrations give
\begin{align}
   \label{eq:hrd8}
\lim_{n\rightarrow\infty}  \sqrt{n} \mE L   
& =  \lim_{n\rightarrow\infty}   \min_{\nu_1} 
 \lp \frac{m\sqrt{n}}{k_1\sqrt{ k_2}}   \mE \lp -  \min (\h_1+\nu_1,0) \rp + \nu_1\frac{\sqrt{n}}{\sqrt{k_2}}  \rp
\nonumber \\
& \hspace{.15in} +
\lim_{n\rightarrow\infty}   \min_{\nu_2} 
\lp  \frac{n\sqrt{n}}{k_2\sqrt{ k_1}}   \mE \lp -  \min (\g_1+\nu_2,0) \rp  + \nu_2\frac{\sqrt{n}}{\sqrt{k_1}} \rp
\nonumber \\
& =      \min_{\nu_1} 
\lp \frac{\alpha }{\beta_1\sqrt{ \beta_2}}  \mE \lp -  \min (\h_1+\nu_1,0) \rp + \frac{\nu_1}{\sqrt{\beta_2}} \rp
\nonumber \\
&  \hspace{.15in} +
   \min_{\nu_2} 
 \lp \frac{1}{\beta_2\sqrt{ \beta_1}}  \mE \lp -  \min (\g_1+\nu_2,0) \rp + \frac{\nu_2}{\sqrt{\beta_1}}
 \rp
\nonumber \\
& =    
\frac{1}{\sqrt{\beta_1\beta_2}}  \min_{\nu_1,\nu_2} 
\lp \frac{\alpha }{\sqrt{ \beta_1}}  \mE \lp -  \min (\h_1+\nu_1,0) \rp + \nu_1\sqrt{\beta_1} 
  +
   \frac{1}{\sqrt{ \beta_2}}  \mE \lp -  \min (\g_1+\nu_2,0) \rp + \nu_2 \sqrt{\beta_2}
 \rp
 . 
\end{align}
 After setting
\begin{equation}\label{eq:hrd9}
 I(\nu) = \frac{1}{2} \nu \hspace{.03in}\erfc\lp \frac{\nu}{\sqrt{2} }\rp - \frac{e^{-\frac{\nu^2}{2}}}{\sqrt{2 \pi} },
\end{equation} 
and solving the integral one finds
\begin{align}
   \label{eq:hrd10}
\lim_{n\rightarrow\infty}  \sqrt{n} \mE L   
  =    
\frac{1}{\sqrt{\beta_1\beta_2}}  \min_{\nu_1,\nu_2} 
\omega(\nu_1,\nu_2) , 
\end{align}
where
\begin{align}
   \label{eq:hrd11}
\omega(\nu_1,\nu_2) 
= 
\lp  -\frac{\alpha }{\sqrt{ \beta_1}}   I(\nu_1) 
  + \nu_1\sqrt{\beta_1}
  - \frac{1}{\sqrt{ \beta_2}} I( \nu_2) 
  + \nu_2\sqrt{\beta_2}   \rp
 . 
\end{align}
Computing the derivatives over $\nu$ gives
 \begin{equation}\label{eq:hrd12}
 \frac{dI(\nu)}{d\nu} =   \frac{\mathrm{erfc}\left(\frac{\sqrt{2}\,\nu }{2}\right)}{2}-\frac{\sqrt{2}\,\nu \,{\mathrm{e}}^{-\frac{\nu ^2}{2}}}{2\,\sqrt{\pi }}+\frac{\sqrt{2}\,\nu \,{\mathrm{e}}^{-\frac{\nu ^2}{2}}}{2\,\sqrt{\pi }}
 =
 \frac{\mathrm{erfc}\left(\frac{\sqrt{2}\,\nu }{2}\right)}{2},
 \end{equation}
 and
 \begin{equation}\label{eq:hrd12a0}
 \frac{d^2I(\nu)}{d\nu^2} =   
\frac{d\lp  \frac{\mathrm{erfc}\left(\frac{\sqrt{2}\,\nu }{2}\right)}{2} \rp}{d\nu}
=
 -\frac{\sqrt{2}\,{\mathrm{e}}^{-\frac{\nu ^2}{2}}}{2\,\sqrt{\pi }} < 0
 .
 \end{equation}
 From  (\ref{eq:hrd11}) and (\ref{eq:hrd12}), we find
 \begin{equation}\label{eq:hrd13}
 \frac{d\omega(\nu_1,\nu_2)}{d\nu_1} =
-\frac{\alpha}{\sqrt{\beta_1}} \frac{dI(\nu_1)}{d\nu_1} +\sqrt{\beta_1} 
 =
-\frac{\alpha}{\sqrt{\beta_1}}  \frac{\mathrm{erfc}\left(\frac{\sqrt{2}\,\nu_1 }{2}\right)}{2}
+\sqrt{\beta_1} .
 \end{equation}
 Moreover,  from (\ref{eq:hrd12a0}) and (\ref{eq:hrd13}), one also have 
 \begin{equation}\label{eq:hrd13a0}
 \frac{d^2\omega(\nu_1,\nu_2)}{d\nu_1^2} =
-\frac{\alpha}{\sqrt{\beta_1}} \frac{d^2I(\nu_1)}{d\nu_1^2}  =
 \frac{\alpha}{\sqrt{\beta_1}} 
 \frac{\sqrt{2}\,{\mathrm{e}}^{-\frac{\nu_1 ^2}{2}}}{2\,\sqrt{\pi }} > 0.
 \end{equation}
 A combination of (\ref{eq:hrd13}), and (\ref{eq:hrd13a0}) then give $\hat{\nu}_1$ as optimal $\nu_1$
 \begin{equation}\label{eq:hrd14}
 \hat{\nu}_1 =
\sqrt{2}\erfinv\lp 1- \frac{2\beta_1}{\alpha} \rp.
 \end{equation}
Analogously to (\ref{eq:hrd13}) and (\ref{eq:hrd14}), we also have
 \begin{equation}\label{eq:hrd15}
 \frac{d\omega(\nu_1,\nu_2)}{d\nu_2} =
-\frac{1}{\sqrt{\beta_2}} \frac{dI(\nu_1)}{d\nu_2} +\sqrt{\beta_2} 
 =
-\frac{1}{\sqrt{\beta_2}}  \frac{\mathrm{erfc}\left(\frac{\sqrt{2}\,\nu_2 }{2}\right)}{2}
+\sqrt{\beta_2} ,
 \end{equation}
and   $\hat{\nu}_2$ as optimal $\nu_2$
 \begin{equation}\label{eq:hrd16}
 \hat{\nu}_2 =
\sqrt{2}\erfinv\lp 1- 2\beta_2\rp.
 \end{equation}
 
We summarize the above in the following theorem.

\begin{theorem}
\label{thm:thm2}
 Assume the setup of Theorem \ref{thm:thm1}. Then
 \begin{eqnarray}
   \label{eq:thm2eq2}
\lim_{n\rightarrow \infty}    \mE \xi\sqrt{n}  & \leq & \lim_{n\rightarrow \infty}   \mE L \sqrt{n}
=
\frac{1}{\sqrt{\beta_1\beta_2}}
\omega(\hat{\nu}_1,\hat{\nu}_2) 
\nonumber \\
& = & 
\frac{1}{\sqrt{\beta_1\beta_2}}
\lp  \frac{\alpha }{\sqrt{ \beta_1}} \frac{e^{-\lp \erfinv\lp1-\frac{2\beta_1}{\alpha}\rp\rp^2}}{\sqrt{2\pi}} 
   + 
    \frac{1 }{\sqrt{ \beta_2}} \frac{e^{-\lp \erfinv(1-2\beta_2)\rp^2}}{\sqrt{2\pi}}
   \rp .
\end{eqnarray}
 \end{theorem}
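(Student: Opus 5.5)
The inequality part of the statement is exactly Theorem \ref{thm:thm1}, so the plan is to establish only the equality: $\lim_{n\rightarrow\infty}\mE L\sqrt{n}$ equals $\frac{1}{\sqrt{\beta_1\beta_2}}\omega(\hat{\nu}_1,\hat{\nu}_2)$, and this in turn equals the closed form. The first reduction is to note that the random dual splits. Since $\cS_{\x}$ and $\cS_{\y}$ impose separate constraints, $\max_{\x,\y}(\h^T\y+\g^T\x)=\max_{\y\in\cS_{\y}}\h^T\y+\max_{\x\in\cS_{\x}}\g^T\x$. Each term is a sum of top order statistics: $\sqrt{k_1}^{-1}$ times the sum of the $k_1$ largest $\h_i$, and similarly for $\g$. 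This makes the min--max exchange in (\ref{eq:hrd3}) rigorous without any appeal to general strong duality. For $a_1\ge\dots\ge a_N$ one has the elementary identity $\sum_{i\le k}a_i=\min_{\nu}\left(\sum_i\max(a_i-\nu,0)+k\nu\right)$, which is exactly (\ref{eq:hrd4}) after the sign change. The identity holds because the objective is convex and piecewise linear in $\nu$, and its minimum is attained at $\nu=a_k$.

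Next I pass to the limit. For fixed $\nu$, the law of large numbers gives $\frac{1}{m}\sum_i\max(\h_i-\nu,0)\rightarrow\mE\max(\h_1-\nu,0)$. I upgrade this to convergence of the minimum over $\nu$, and of its expectation, by three observations. The functions are convex in $\nu$. The minimizer $\nu$ stays in a compact set with high probability, since it is an empirical quantile at level $k_1/m\rightarrow\beta_1/\alpha\in(0,1)$. Gaussian concentration (Lipschitz dependence on $\h$) then gives uniform integrability. Standard results on the convergence of convex minima then yield (\ref{eq:hrd8}) with the scalings $\alpha,\beta_1,\beta_2$.

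It remains to compute explicitly. I evaluate $\mE\max(h-\nu,0)=\frac{e^{-\nu^2/2}}{\sqrt{2\pi}}-\frac{\nu}{2}\erfc(\nu/\sqrt2)=-I(\nu)$, which gives $\omega$. Using (\ref{eq:hrd13})--(\ref{eq:hrd16}), strict convexity implies that the stationary points are $\hat\nu_1,\hat\nu_2$. At these points $\erfc(\hat\nu_1/\sqrt2)=2\beta_1/\alpha$, so $-\frac{\alpha}{\sqrt{\beta_1}}I(\hat\nu_1)=-\hat\nu_1\sqrt{\beta_1}+\frac{\alpha}{\sqrt{\beta_1}}\frac{e^{-\hat\nu_1^2/2}}{\sqrt{2\pi}}$. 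The linear term $\hat\nu_1\sqrt{\beta_1}$ cancels, and since $\hat\nu_1^2/2=(\erfinv(1-2\beta_1/\alpha))^2$, the first summand of the closed form follows. The $\nu_2$ term is handled identically with $\alpha$ replaced by $1$.

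I expect the main obstacle to be the interchange of the limit, the expectation and the minimization over $\nu$; the algebra is routine. I would handle it first via the order-statistic identity. This identity shows that $L$ is, up to scaling, an L-statistic (a trimmed mean) whose limit is the quantile integral $\int_{1-\beta_1/\alpha}^1\Phi^{-1}(u)\,du$, which matches the Gaussian-density expression directly. That provides an independent check that avoids the duality discussion altogether.
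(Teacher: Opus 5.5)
Your proposal is correct and follows essentially the same route as the paper. You split the random dual, reduce each part to a one-dimensional convex minimization over $\nu$, pass to the limit, evaluate $\mE\max(h-\nu,0)=-I(\nu)$, and substitute the stationary points $\hat{\nu}_1,\hat{\nu}_2$ so that the linear terms cancel. The paper gets the one-dimensional minimization from a Lagrangian and an appeal to strong duality. Your top-$k$ order-statistic identity reaches the same formula and is a cleaner justification of that min--max swap for this binary problem. Your treatment of the limit/expectation/minimum interchange and the quantile-integral check spell out what the paper compresses into ``the law of large numbers and concentrations''.
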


\begin{proof}
Follows from the previous discussion through a combination of (\ref{eq:thm1eq2}), (\ref{eq:hrd10}),  (\ref{eq:hrd11}),  (\ref{eq:hrd14}), and  (\ref{eq:hrd16}). In particular, one notes from (\ref{eq:hrd9}) and (\ref{eq:hrd14})
\begin{align}\label{eq:pr2eq1}
 I(\hat{\nu}_1) & = \frac{1}{2} \hat{\nu}_1 \hspace{.03in}\erfc\lp \frac{\hat{\nu}_1}{\sqrt{2} }\rp - \frac{e^{-\frac{\hat{\nu}_1^2}{2}}}{\sqrt{2 \pi} }
 =
 \frac{1}{2} \hat{\nu}_1 \lp 1 - \erf\lp \frac{\hat{\nu}_1}{\sqrt{2} }\rp \rp- \frac{e^{-\frac{\hat{\nu}_1^2}{2}}}{\sqrt{2 \pi} }
\nonumber \\
& =
 \frac{1}{2} \hat{\nu}_1 \lp 1 - \erf\lp \erfinv\lp 1- \frac{2\beta_1}{\alpha}\rp\rp \rp- \frac{e^{-\frac{\hat{\nu}_1^2}{2}}}{\sqrt{2 \pi} }
 =
 \frac{\beta_1}{\alpha} \hat{\nu}_1  - \frac{e^{-\frac{\hat{\nu}_1^2}{2}}}{\sqrt{2 \pi} } ,
\end{align}
and analogously from (\ref{eq:hrd9}) and (\ref{eq:hrd16}) 
\begin{align}\label{eq:pr2eq2}
 I(\hat{\nu}_2) & =  
  \beta_2 \hat{\nu}_2  - \frac{e^{-\frac{\hat{\nu}_2^2}{2}}}{\sqrt{2 \pi} } .
\end{align}
Plugging $ I(\hat{\nu}_1) $ from (\ref{eq:pr2eq1}) and $ I(\hat{\nu}_2)$  from (\ref{eq:pr2eq2}) into 
(\ref{eq:hrd11}) gives
\begin{align}
   \label{eq:pr2eqe}
\frac{1}{\sqrt{\beta_1\beta_2}}\omega(\hat{\nu}_1,\hat{\nu}_2) 
& =
\frac{1}{\sqrt{\beta_1\beta_2}} 
\lp  -\frac{\alpha }{\sqrt{ \beta_1}} 
\lp   \frac{\beta_1}{\alpha} \hat{\nu}_1  - \frac{e^{-\frac{\hat{\nu}_1^2}{2}}}{\sqrt{2 \pi} }   \rp
  + \hat{\nu}_1\sqrt{\beta_1}
  - \frac{1}{\sqrt{ \beta_2}} \lp   \beta_2 \hat{\nu}_2 
   - \frac{e^{-\frac{\hat{\nu}_2^2}{2}}}{\sqrt{2 \pi} }\rp 
  + \hat{\nu}_2\sqrt{\beta_2}   \rp
\nonumber \\
& =
\frac{1}{\sqrt{\beta_1\beta_2}} 
\lp  \frac{\alpha }{\sqrt{ \beta_1}} 
   \frac{e^{-\frac{\hat{\nu}_1^2}{2}}}{\sqrt{2 \pi} }  
  + \frac{1}{\sqrt{ \beta_2}}  \frac{e^{-\frac{\hat{\nu}_2^2}{2}}}{\sqrt{2 \pi} }  \rp
\nonumber \\
& =
\frac{1}{\sqrt{\beta_1\beta_2}} 
\lp  \frac{\alpha }{\sqrt{ \beta_1}} 
   \frac{e^{-\lp \erfinv\lp 1- \frac{2\beta_1}{\alpha}\rp\rp^2}}{\sqrt{2 \pi} }  
  + \frac{1}{\sqrt{ \beta_2}}  \frac{e^{-\lp\erfinv\lp 1-2\beta_2 \rp\rp^2}}{\sqrt{2 \pi} }  \rp
 , 
\end{align}
which matches (\ref{eq:thm2eq2}) and completes the proof.
\end{proof}

We also have the following corollary in the squared scenario. 

\begin{corollary}
\label{cor:cor1}
 Assume the squared matrices setup of Theorem \ref{thm:thm2} with $\alpha=1$ and $\beta_1=\beta_2=\beta$. Then
 \begin{eqnarray}
   \label{eq:cor1eq2}
\lim_{n\rightarrow \infty}    \mE \xi\sqrt{n}  & \leq &   
     \sqrt{\frac{2}{\pi\beta^3}}  e^{-\lp \erfinv(1-2\beta)\rp^2} 
   .
\end{eqnarray}
 \end{corollary}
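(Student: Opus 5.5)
The corollary is a direct specialization of Theorem \ref{thm:thm2}, so the plan is simply to substitute $\alpha=1$ and $\beta_1=\beta_2=\beta$ into (\ref{eq:thm2eq2}) and simplify. No new probabilistic input is needed: the comparison inequality $\lim_{n\rightarrow\infty}\mE\xi\sqrt{n}\leq\lim_{n\rightarrow\infty}\mE L\sqrt{n}$ comes from Theorem \ref{thm:thm1}, and the closed form of the right-hand side comes from Theorem \ref{thm:thm2}.

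First I check the optimizing Lagrange parameters. With $\alpha=1$ and $\beta_1=\beta$, (\ref{eq:hrd14}) gives $\hat{\nu}_1=\sqrt{2}\erfinv(1-2\beta)$. With $\beta_2=\beta$, (\ref{eq:hrd16}) gives $\hat{\nu}_2=\sqrt{2}\erfinv(1-2\beta)$. So $\hat{\nu}_1=\hat{\nu}_2$, and the two exponential terms in (\ref{eq:thm2eq2}) coincide, each equal to $e^{-(\erfinv(1-2\beta))^2}/\sqrt{2\pi}$. I also note that $\beta\in(0,1/2)$ is needed so that $1-2\beta\in(-1,1)$ and $\erfinv$ is finite. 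This is implicit in Theorem \ref{thm:thm2}, and for $\beta\geq 1/2$ the stationarity condition (\ref{eq:hrd13}) would have to be treated as a boundary case, but the statement presumably lives in the interior regime.

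Next I collect the prefactors. The bracket in (\ref{eq:thm2eq2}) becomes
\[
\frac{2}{\sqrt{\beta}}\,\frac{e^{-(\erfinv(1-2\beta))^2}}{\sqrt{2\pi}}.
\]
Multiplying by $\frac{1}{\sqrt{\beta_1\beta_2}}=\frac{1}{\beta}$ gives
\[
\frac{2}{\beta^{3/2}\sqrt{2\pi}}\,e^{-(\erfinv(1-2\beta))^2}.
\]
Since $\frac{2}{\sqrt{2\pi}}=\sqrt{\frac{2}{\pi}}$, this equals
\[
\sqrt{\frac{2}{\pi\beta^3}}\,e^{-(\erfinv(1-2\beta))^2},
\]
which is (\ref{eq:cor1eq2}).

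There is no real obstacle here. The only point needing care is the admissible range of $\beta$, which I would state explicitly, together with the arithmetic of the constant.
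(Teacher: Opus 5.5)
Your substitution argument is correct and is essentially the paper's proof, which just says the corollary follows from Theorem \ref{thm:thm2}; your constant $\frac{2}{\sqrt{2\pi}}\beta^{-3/2}=\sqrt{\frac{2}{\pi\beta^3}}$ is right. Your side remark on the range is wrong, though. First, $1-2\beta\in(-1,1)$ holds for every $\beta\in(0,1)$, not only for $\beta<1/2$. Second, the multipliers $\nu_1,\nu_2$ are unconstrained, so for $\beta\geq 1/2$ the stationarity equation $\frac{1}{2}\erfc(\hat{\nu}/\sqrt{2})=\beta$ still has the interior solution $\hat{\nu}=\sqrt{2}\erfinv(1-2\beta)\leq 0$. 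No boundary case arises, and the bound holds on all of $(0,1)$, as the paper's Table 1 values at $\beta=0.7$ and $\beta=0.88$ show.
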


\begin{proof}
Follows automatically from Theorem \ref{thm:thm2}.
\end{proof}

We also have another corollary that addresses small submatrices within the squared scenario. 

\begin{corollary}
\label{cor:cor1a0}
As in Corollary \ref{cor:cor1}, assume the squared matrices setup of Theorem \ref{thm:thm2} with $\alpha=1$ and $\beta_1=\beta_2=\beta$. Additionally, let $\beta\rightarrow 0$. Then
 \begin{eqnarray}
   \label{eq:cor1eq2}
\lim_{n\rightarrow \infty}    \mE \xi\sqrt{n}  & \leq &   
     2  \sqrt{\frac{2}{\beta}\log\lp \frac{1}{\beta}  \rp   }   .
\end{eqnarray}
 \end{corollary}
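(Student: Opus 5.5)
Starting from Corollary \ref{cor:cor1}, the bound is $\sqrt{\frac{2}{\pi\beta^3}}\, e^{-t^2}$ with $t=\erfinv(1-2\beta)$, so $\hat{\nu}=\sqrt{2}\,t$ is defined by $\frac{1}{2}\erfc(\hat{\nu}/\sqrt{2})=\beta$, i.e.\ $\hat{\nu}$ is the upper $\beta$-quantile of the standard normal. The plan is to find the asymptotics of $e^{-\hat{\nu}^2/2}$ as $\beta\rightarrow 0$ from the Gaussian tail expansion. The target statement should be read as asymptotic: the ratio of the right side of Corollary \ref{cor:cor1} to $2\sqrt{\frac{2}{\beta}\log(1/\beta)}$ tends to $1$ (or is at most $1+o(1)$).

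\textbf{Step 1 (Mills ratio).} Use the standard bounds
\[
\frac{\hat{\nu}}{1+\hat{\nu}^2}\,\frac{e^{-\hat{\nu}^2/2}}{\sqrt{2\pi}} \;\leq\; \beta=\frac{1}{2}\erfc\lp\frac{\hat{\nu}}{\sqrt{2}}\rp \;\leq\; \frac{1}{\hat{\nu}}\,\frac{e^{-\hat{\nu}^2/2}}{\sqrt{2\pi}},
\]
valid for $\hat{\nu}>0$. Together they give $\frac{e^{-\hat{\nu}^2/2}}{\sqrt{2\pi}}=\beta\hat{\nu}\,(1+o(1))$ as $\beta\rightarrow 0$, since $\hat{\nu}\rightarrow\infty$.

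\textbf{Step 2 (quantile asymptotics).} Take logarithms in Step 1 to get $\hat{\nu}^2/2=\log(1/\beta)-\log\hat{\nu}-\frac{1}{2}\log(2\pi)+o(1)$. This yields $\hat{\nu}^2=2\log(1/\beta)(1+o(1))$, and hence $\hat{\nu}=\sqrt{2\log(1/\beta)}\,(1+o(1))$. The only care needed is the self-consistent bootstrap: first show $\hat{\nu}\rightarrow\infty$ and $\log\hat{\nu}=O(\log\log(1/\beta))$, then substitute back.

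\textbf{Step 3 (assemble).} Substituting gives
\[
\sqrt{\frac{2}{\pi\beta^3}}\,e^{-\hat{\nu}^2/2} = \frac{2}{\beta^{3/2}}\,\frac{e^{-\hat{\nu}^2/2}}{\sqrt{2\pi}} = \frac{2}{\beta^{3/2}}\,\beta\,\sqrt{2\log(1/\beta)}\,(1+o(1)) = 2\sqrt{\frac{2}{\beta}\log\lp\frac{1}{\beta}\rp}\,(1+o(1)).
\]
This is the claim. An alternative, equivalent route works directly from Theorem \ref{thm:thm2} with two symmetric contributions of $\frac{1}{\beta^{3/2}}\frac{e^{-\hat{\nu}^2/2}}{\sqrt{2\pi}}$ each, which explains the factor $2$.

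\textbf{Main obstacle.} The delicate step is Step 2, specifically controlling the error terms so that the $(1+o(1))$ factors stay multiplicative and do not hide a constant factor. The lower-order terms $\log\hat{\nu}$ and $\frac{1}{2}\log(2\pi)$ only affect $\hat{\nu}^2$ additively. They therefore enter $\hat{\nu}$ as relative corrections of order $\log\log(1/\beta)/\log(1/\beta)$, which vanish. The exponential itself is handled exactly by Step 1 rather than through the asymptotics of $\hat{\nu}$, so no exponentiated error arises.
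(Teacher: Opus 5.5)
Your proposal is correct and follows the paper's argument: the paper sets $A=\erfinv(1-2\beta)$ (your $\hat{\nu}/\sqrt{2}$), uses the tail asymptotic $\beta=\frac{1}{2}\erfc(A)\sim\frac{e^{-A^2}}{2\sqrt{\pi}A}$ to replace $e^{-A^2}$ by $2\sqrt{\pi}A\beta$, and then inserts $A\sim\sqrt{\log(1/\beta)}$, which are your Steps 1--3. Your two-sided Mills-ratio bounds make the paper's informal ``$\longrightarrow$'' steps quantitative.
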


\begin{proof}
We first set
\begin{eqnarray}
 \label{eq:rseq1}
   A &=& \erfinv(1-2\beta).
\end{eqnarray}
Then
\begin{eqnarray}
\label{eq:rseq2}
\beta =\frac{1}{2}\erfc\lp A\rp.
\end{eqnarray}
Since $\beta \rightarrow 0$ one then has $A\rightarrow \infty$. From (\ref{eq:rseq2} we then have)
\begin{eqnarray}
 \label{eq:rseq2}
\beta =\frac{1}{2}\erfc\lp A\rp \longrightarrow 
\frac{1}{2\sqrt{\pi}} \frac{e^{-A^2}}{A} \quad \mbox{and}\quad A\longrightarrow \sqrt{\log\lp \frac{1}{\beta} \rp} .
\end{eqnarray}

This then gives
\begin{eqnarray}
 \label{eq:rseq3}
     \sqrt{\frac{2}{\pi\beta^3}}  e^{-\lp \erfinv(1-2\beta)\rp^2} 
     =
          \sqrt{\frac{2}{\pi\beta^3}}  e^{-A^2} \longrightarrow \sqrt{\frac{2}{\beta^3}}2A\beta
 \longrightarrow \sqrt{\frac{2}{\beta^3}}2   \sqrt{ \log\lp \frac{1}{\beta} \rp } \beta     
  \longrightarrow   2\sqrt{ \frac{2}{\beta}  \log\lp \frac{1}{\beta} \rp }  .
\end{eqnarray}
 
\end{proof}

%%%%%%%%%%%%%%%%%%%%%%%%%%%%%%%%%%%%%%%%%%%%%%%%%%%%%%%%%%%%%%%%%
%%%%%%%%%%%%%%%%%%%%%%%%%%%%%%%%%%%%%%%%%%%%%%%%%%%%%%%%%%%%%%%%%
\subsection{Double-checking strong random duality}
\label{sec:strrd}
%%%%%%%%%%%%%%%%%%%%%%%%%%%%%%%%%%%%%%%%%%%%%%%%%%%%%%%%%%%%%%%%%
%%%%%%%%%%%%%%%%%%%%%%%%%%%%%%%%%%%%%%%%%%%%%%%%%%%%%%%%%%%%%%%%%

The random dual  established through Theorems \ref{thm:thm1}  and \ref{thm:thm2}   upper-bounds $\mE\xi$.  Finally, the last step of the RDT machinery relats to double checking if the strong random
duality holds as well. As the underlying problem is of highly discrete nature, the corresponding reversal considerations from \cite{StojnicRegRndDlt10} can not be applied suggesting the absence of strong random duality and the strict boundedness nature of the above upper bounds. The next section shows that this is indeed the case.

%%%%%%%%%%%%%%%%%%%%%%%%%%%%%%%%%%%%%%%%%%%%%%%%%%%%%%%%%%%%%%%%%
%%%%%%%%%%%%%%%%%%%%%%%%%%%%%%%%%%%%%%%%%%%%%%%%%%%%%%%%%%%%%%%%%
\section{Lowering upper bounds via lifted RDT}
\label{sec:plrdt}
%%%%%%%%%%%%%%%%%%%%%%%%%%%%%%%%%%%%%%%%%%%%%%%%%%%%%%%%%%%%%%%%%
%%%%%%%%%%%%%%%%%%%%%%%%%%%%%%%%%%%%%%%%%%%%%%%%%%%%%%%%%%%%%%%%%

To see whether the above bounds can be further lowered, we utilize the lifted methodology \cite{Stojnictcmspnncapliftedrdt23}. In particular we have the following theorem.

\begin{theorem}
\label{thm:thm4}
Following the Theorem \ref{thm:thm1} setup, let $n,m,k_1,k_2\in\mN$ be as in (\ref{eq:amat0a0}) and let each element of $\h\in\mR^m$ and $\g\in\mR^n$ be standard normal random variable independent of any other randomness. For $c_3>0$ set
\begin{eqnarray}
   \label{eq:thm4eq1}
 L^{(2)} & = &  
\frac{1}{\sqrt{k_1 k_2}} 
\min_{c_3>0}\lp 
  -\frac{c_{3}}{2} + \frac{1}{c_3}\log \lp
    \mE e^{c_{3}  \max_{\y\in\cS_{\y}}   \lp  \h^T\y  \rp    }
        \rp
        +
  \frac{1}{c_3}\log \lp
    \mE e^{c_{3}  \max_{\x\in\cS_{\x}}   \lp  \g^T\x \rp    }
        \rp
\rp .
\end{eqnarray}
For $A\in\mR^{m\times n}$  comprised of independent standard normal entries and $\xi$ as in (\ref{eq:algb1}), one has  
\begin{eqnarray}
   \label{eq:thm4eq2}
\lim_{n\rightarrow \infty}    \mE \xi\sqrt{n}   \leq  \lim_{n\rightarrow \infty}   \mE L^{(2)} \sqrt{n} \leq  \lim_{n\rightarrow \infty}   \mE L \sqrt{n},
\end{eqnarray}
with the middle term being the lifted random dual.
\end{theorem}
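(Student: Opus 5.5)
Two inequalities need proving. For the left one, $\mE\xi\sqrt{n}\le \mE L^{(2)}\sqrt{n}$, I would use the lifted (exponential) comparison of Gaussian processes instead of plain Slepian. The right one, $\mE L^{(2)}\le \mE L$, should be a free-energy vs.\ ground-state comparison.

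\textbf{Step 1 (lifted comparison).} Reuse the processes from the proof of Theorem \ref{thm:thm1}, but scale the auxiliary term by $c_3$. Fix $c_3>0$ and compare the free energies
\[
\psi_1=\frac{1}{c_3}\log \mE e^{c_3\max_{\x,\y}\lp \y^TA\x+ g\rp}, \qquad
\psi_2=\frac{1}{c_3}\log \mE e^{c_3\max_{\x,\y}\lp \h^T\y+\g^T\x\rp}.
\]
I would use the exponential analogue of Theorem \ref{thm:Gordonpos1}, i.e.\ Corollary 3 of \cite{Stojnicgscompyx16} / Corollary 4 of \cite{Stojnicgscomp16}. It applies to $\mE f(\max)$ for the increasing convex function $f(t)=e^{c_3t}$, under the same covariance conditions verified in (\ref{eq:mr5}) and (\ref{eq:mr5a0}). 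Those conditions are: equal variances, and $\mE\cG\cG\ge \mE\cG_u\cG_u$ off-diagonal.

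One can also argue directly. Interpolate $\cG_t=\sqrt{t}\,\cG+\sqrt{1-t}\,\cG_u$ and write the soft-max version $\frac{1}{\beta}\log\sum e^{\beta \cG_t}$, with $\beta\to\infty$ at the end. Gaussian integration by parts gives that the derivative of $\mE F(\cdot)$ has the sign of $-\sum_{i\ne l}(\text{cov difference})\,\partial_{il}F$. For $F=\exp(c_3\cdot\mathrm{softmax})$, the mixed partials decompose into a negative Gibbs-overlap part and a positive $c_3$-part. The $c_3$-part, combined with the equality on the diagonal, is exactly why the auxiliary variable $g$ (variance 1) must be kept.

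This yields $\psi_1\le\psi_2$. Since $g$ is independent of the rest, $\psi_1=\frac{c_3}{2}+\frac{1}{c_3}\log\mE e^{c_3\max \y^TA\x}$. By Jensen, $\psi_1\ge \frac{c_3}{2}+\mE\max\y^TA\x$. Since $\h$ and $\g$ are independent, $\psi_2$ factorizes into the two log-MGF terms of (\ref{eq:thm4eq1}). Rearranging, for every $c_3>0$,
\[
\mE\max\y^TA\x\le -\frac{c_3}{2}+\frac{1}{c_3}\log\mE e^{c_3\max_{\y}\h^T\y}+\frac{1}{c_3}\log\mE e^{c_3\max_{\x}\g^T\x}.
\]
Minimize over $c_3$, divide by $\sqrt{k_1k_2}$, multiply by $\sqrt n$, and take limits to get the first inequality.

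\textbf{Step 2 (lifted $\le$ plain).} Let $c_3\to0$. Then $\frac{1}{c_3}\log\mE e^{c_3X}\to\mE X$ and $-c_3/2\to0$, so the infimum over $c_3$ is at most $\frac{1}{\sqrt{k_1k_2}}\mE\max(\h^T\y+\g^T\x)=\mE L$. This holds for each $n$, and passing to the limit gives the second inequality.

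\textbf{Scaling.} The $c_3$ must be taken on the natural scale, $c_3=\bar c_3\sqrt{k_1k_2/n}\cdot n$-type, so that the log-MGF terms have nondegenerate limits. Concentration as in Remark \ref{rem:rem0} then justifies exchanging the minimum over $c_3$ with the limit in $n$.

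\textbf{Main obstacle.} The hardest step is Step 1's comparison for $e^{c_3\max}$: the sign of the mixed-derivative terms has to hold on the whole index set. This is where I would rely on the cited Stojnic comparison results. If they prove insufficient, I would fall back on verifying the sign directly in the interpolation argument. In that calculation the key identity is the factorization $(1-\y^{(1)T}\y^{(2)})(1-\x^{(1)T}\x^{(2)})\ge0$, and the positive $c_3$-part must cancel against the diagonal terms.
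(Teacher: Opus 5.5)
Your proposal is correct and follows essentially the same route as the paper. That route is Gordon's Corollary 1.3 (a Slepian-type comparison for increasing functions, which the paper notes is a special case of the Stojnic corollaries you cite) applied with $\psi_q(x)=e^{c_3x}$ to $\cG$ and $\cG_u$, then extracting $e^{c_3^2/2}$ from the independent $g$, Jensen, factorizing by independence of $\h$ and $\g$, minimizing over $c_3$, and letting $c_3\rightarrow 0$ for the second inequality.

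Two remarks. The step you flag as the main obstacle is immediate: Slepian's lemma gives stochastic domination of the maxima, so any increasing $\psi_q$ works and no convexity or mixed-derivative sign analysis is needed. The scaling/limit-exchange paragraph is also unnecessary, because both inequalities already hold for every fixed $n$; moreover, the natural scale the paper uses later is $c_3\rightarrow c_3\sqrt{n}$, not the one you wrote.
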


\begin{proof}
We start by recalling Corollary 1.3 from \cite{Gordon85}.   

\begin{theorem}(\cite{Gordon85})
\label{thm:Gordoncor1} Assume the setup of Theorem \ref{thm:Gordonpos1}. Let $\psi_q(\cdot)$ be an increasing function on the real axis. Then
\begin{equation}\label{eq:thm5eq1}
E(\min_{i}\psi_q(X_{i}))\leq E(\min_i \psi_q(Y_{i})) \quad \Longleftrightarrow \quad  E(\max_{i}\psi_q(X_{i}))\geq E(\max_i\psi_q(Y_{i})).
\end{equation}
\end{theorem}

For $\psi_q(x)=x$, one has that Theorem \ref{thm:Gordoncor1} matches Theorem \ref{thm:Gordonpos1}. The key question is whether a better than linear choice can be found for $\psi_q(\cdot)$. We discover that $\psi_q(x)=e^{c_3 x}$, with $c_3>0$, is an excellent choice. Along the same lines, we also note that Theorems \ref{thm:Gordonpos1} and  \ref{thm:Gordoncor1} are special cases of concepts presented in Corollary 3 in \cite{Stojnicgscompyx16}  and in Corollary 4 in \cite{Stojnicgscomp16}.

Combining (\ref{eq:mr5}), (\ref{eq:mr5a0}), and (\ref{eq:thm5eq1})  with correspondence $Y\leftrightarrow\cG$ and $X\leftrightarrow\cG_u$ and the above mentioned choice $\psi_q(x)=e^{c_3 x},c_3>0$, gives the following
\begin{eqnarray}
\label{eq:mr6}
  \mE \max_{\x\in\cS_{\x},\y\in\cS_{\y}} e^{c_{3,u}\cG(\x,\y)} &\leq &    \mE \max_{\x\in\cS_{\x},\y\in\cS_{\y}} e^{c_{3,u}\cG_u(\x,\y)} .
\end{eqnarray}
From (\ref{eq:mr1}) and (\ref{eq:mr6}) we then find
\begin{eqnarray}
\label{eq:mr7}
  \mE  \max_{\x\in\cS_{\x},\y\in\cS_{\y}} e^{c_{3}\lp  \y^TA\x +  g \rp } &\leq &   \mE \max_{\x\in\cS_{\x},\y\in\cS_{\y}}  e^{c_{3} \lp  \h^T\y +\g^T\x \rp  },
  \end{eqnarray}
which is equivalent to
\begin{eqnarray}
\label{eq:mr8}
  \mE  e^{c_{3} \max_{\x\in\cS_{\x},\y\in\cS_{\y}}   \lp  \y^TA\x \rp } 
  \leq  
   e^{-\frac{c_{3}^2}{2}} \mE e^{c_{3}  \max_{\x\in\cS_{\x},\y\in\cS_{\y}}   \lp  \h^T\y +\g^T\x \rp    } ,
\end{eqnarray}
and
\begin{eqnarray}
\label{eq:mr9}
     \log \lp \mE  e^{c_{3} \max_{\x\in\cS_{\x},\y\in\cS_{\y}}   \lp  \y^TA\x \rp } 
   \rp
      \leq 
   -\frac{c_{3}^2}{2} + \log \lp
    \mE e^{c_{3}  \max_{\x\in\cS_{\x},\y\in\cS_{\y}}   \lp  \h^T\y +\g^T\x \rp    }
        \rp.
\end{eqnarray}
Applying Jensen's inequality to the left-hand side and dividing both sides by $c_3$, we obtain
\begin{eqnarray}
\label{eq:mr10}
\mE \max_{\x\in\cS_{\x},\y\in\cS_{\y}}    \y^TA\x   
     & \leq &
   -\frac{c_{3}}{2} + \frac{1}{c_3}\log \lp
    \mE e^{c_{3}  \max_{\x\in\cS_{\x},\y\in\cS_{\y}}   \lp  \h^T\y +\g^T\x \rp    }
        \rp
\nonumber \\
 & \leq &
  -\frac{c_{3}}{2} + \frac{1}{c_3}\log \lp
    \mE e^{c_{3}  \max_{\y\in\cS_{\y}}   \lp  \h^T\y  \rp    }
        \rp
        +
  \frac{1}{c_3}\log \lp
    \mE e^{c_{3}  \max_{\x\in\cS_{\x}}   \lp  \g^T\x \rp    }
        \rp.
\end{eqnarray}
Since the above holds for any $c_3>0$, it also holds for the ``worst-case'' one, i.e., one has
\begin{eqnarray}
\label{eq:mr10}
\mE \max_{\x\in\cS_{\x},\y\in\cS_{\y}}    \y^TA\x   
   \leq 
   \min_{c_3>0} 
   \lp
  -\frac{c_{3}}{2} + \frac{1}{c_3}\log \lp
    \mE e^{c_{3}  \max_{\y\in\cS_{\y}}   \lp  \h^T\y  \rp    }
        \rp
        +
  \frac{1}{c_3}\log \lp
    \mE e^{c_{3}  \max_{\x\in\cS_{\x}}   \lp  \g^T\x \rp    }
        \rp
        \rp ,
\end{eqnarray}
which matches the first inequality in (\ref{eq:thm4eq1}). After observing that the second inequality in (\ref{eq:thm4eq1}) follows automatically for $c_3\rightarrow 0$, the theorem's proof is completed.
\end{proof}

%%%%%%%%%%%%%%%%%%%%%%%%%%%%%%%%%%%%%%%%%%%%%%%%%%%%%%%%%%%%%%%%%
%%%%%%%%%%%%%%%%%%%%%%%%%%%%%%%%%%%%%%%%%%%%%%%%%%%%%%%%%%%%%%%%%
\subsection{Handling lifted random dual}
\label{sec:hlrd}
%%%%%%%%%%%%%%%%%%%%%%%%%%%%%%%%%%%%%%%%%%%%%%%%%%%%%%%%%%%%%%%%%
%%%%%%%%%%%%%%%%%%%%%%%%%%%%%%%%%%%%%%%%%%%%%%%%%%%%%%%%%%%%%%%%%

From (\ref{eq:hrd2})-(\ref{eq:hrd4}) we have
\begin{eqnarray}
   \label{eq:hlrd1}
 \max_{\y\in\cS_{\y}}   \lp  \h^T\y  \rp
  & = &   
\min_{\nu_1} \lp  -    \frac{1}{\sqrt{k_1}}\sum_{i=1}^{m} \min (\h_i+\nu_1,0)  +\nu_1\sqrt{k_1} \rp
\nonumber \\
\max_{\x\in\cS_{\x}}   \lp  \g^T\x  \rp  & = &   
\min_{\nu_2} \lp  - \frac{1}{\sqrt{k_2}}\sum_{i=1}^{n} \min (\g_i+\nu_2,0) 
+\nu_2\sqrt{k_2} \rp
    .
\end{eqnarray}
We then also have
\begin{eqnarray}
   \label{eq:hlrd2}
\frac{1}{c_3}\log \lp
    \mE e^{c_{3}  \max_{\y\in\cS_{\y}}   \lp  \h^T\y  \rp    }
        \rp
&  = &
\min_{\nu_1} 
\frac{1}{c_3}\log \lp
    \mE e^{c_{3}  \lp  -    \frac{1}{\sqrt{k_1}}\sum_{i=1}^{m} \min (\h_i+\nu_1,0)  +\nu_1\sqrt{k_1}
   \rp    }
        \rp
\nonumber \\
& = &        
\min_{\nu_1}  
\lp
\frac{m}{c_3}\log \lp
    \mE e^{c_{3}  \lp  -    \frac{1}{\sqrt{k_1}}\min (\h_1+\nu_1,0)  
   \rp    }
        \rp + \nu_1\sqrt{k_1} \rp,
\end{eqnarray}
and
\begin{eqnarray}
   \label{eq:hlrd3}
\frac{1}{c_3}\log \lp
    \mE e^{c_{3}  \max_{\x\in\cS_{\x}}   \lp  \g^T\x  \rp    }
        \rp
&  = &
\min_{\nu_2} 
\frac{1}{c_3}\log \lp
    \mE e^{c_{3}  \lp  -    \frac{1}{\sqrt{k_2}}\sum_{i=1}^{n} \min (\g_i+\nu_2,0)  +\nu_2\sqrt{k_2}
   \rp    }
        \rp
\nonumber \\
& = &      
\min_{\nu_2} 
\lp    
\frac{n}{c_3}\log \lp
    \mE e^{c_{3}  \lp  -    \frac{1}{\sqrt{k_2}}\min (\g_1+\nu_2,0)  
   \rp    }
        \rp + \nu_2\sqrt{k_2} \rp.
\end{eqnarray}
Combining (\ref{eq:thm4eq1}), (\ref{eq:hlrd2}), and (\ref{eq:hlrd3}),  we further find
\begin{multline}
\label{eq:hlrd4}
L^{(2)}   = \frac{1}{\sqrt{k_1k_2}} 
   \min_{c_3>0,\nu_1,\nu_2} 
   \Bigg (\Bigg .
  -\frac{c_{3}}{2} 
  + 
  \frac{m}{c_3}\log \lp
    \mE e^{c_{3}  \lp  -    \frac{1}{\sqrt{k_1}}\min (\h_1+\nu_1,0)  
   \rp    }
        \rp + \nu_1\sqrt{k_1} 
\\
        +
\frac{n}{c_3}\log \lp
    \mE e^{c_{3}  \lp  -    \frac{1}{\sqrt{k_2}}\min (\g_1+\nu_2,0)  
   \rp    }
        \rp + \nu_2\sqrt{k_2}         \Bigg .\Bigg ) .
\end{multline}
 Adopting $c_3\rightarrow c_3\sqrt{n}$ scaling, one obtains
\begin{multline}
 \label{eq:hlrd5}
\lim_{n\rightarrow \infty } \sqrt{n} L^{(2)}   = 
\frac{1}{\sqrt{\beta_1 \beta_2}} 
   \min_{c_3>0,\nu_1,\nu_2} 
   \Bigg (\Bigg .
  -\frac{c_{3}}{2} 
  + 
  \frac{\alpha}{c_3}\log \lp
    \mE e^{c_{3}  \lp  -    \frac{1}{\sqrt{\beta_1}}\min (\h_1+\nu_1,0)  
   \rp    }
        \rp + \nu_1\sqrt{\beta_1} 
\\
        +
\frac{1}{c_3}\log \lp
    \mE e^{c_{3}  \lp  -    \frac{1}{\sqrt{\beta_2}}\min (\g_1+\nu_2,0)  
   \rp    }
        \rp + \nu_2\sqrt{\beta_2}         \Bigg .\Bigg ) .
\end{multline}
After solving the integrals, we also have
 \begin{eqnarray}
 \label{eq:hlrd6}
 I^{(2)}(c_3,\nu;\beta) &=& \mE e^{c_{3}  \lp  -    \frac{1}{\sqrt{\beta}}\min (\h_1+\nu,0)  
   \rp    } 
    =  \frac{\mathrm{erfc}\left(-\frac{\sqrt{2}\,\nu }{2}\right)}{2}
      +\frac{{\mathrm{e}}^{-\frac{c_{3}\,\left(2\,\nu -\frac{c_{3}}{\sqrt{\beta }}\right)}{2\,\sqrt{\beta }}}\,\mathrm{erfc}\left(\frac{\sqrt{2}\,\left(\nu -\frac{c_{3}}{\sqrt{\beta }}\right)}{2}\right)}{2} .
 \end{eqnarray}
A combination of (\ref{eq:hlrd5}) and (\ref{eq:hlrd6}) gives
\begin{multline}
\label{eq:hlrd7}
\lim_{n\rightarrow \infty } \sqrt{n} L^{(2)}   = 
   \frac{1}{\sqrt{\beta_1 \beta_2}} 
   \min_{c_3>0,\nu_1,\nu_2} 
   \Bigg (\Bigg .
  -\frac{c_{3}}{2} 
  + 
  \frac{\alpha}{c_3}\log \lp
    I^{(2)}(c_3,\nu_1;\beta_1)
        \rp + \nu_1\sqrt{\beta_1} 
       \\
        +
\frac{1}{c_3}\log \lp
       I^{(2)}(c_3,\nu_2;\beta_2)
      \rp + \nu_2\sqrt{\beta_2}         \Bigg .\Bigg ) .
\end{multline}
  
We summarize the above discussion into the following theorem.

\begin{theorem}
\label{thm:thm6}
Assume the setup of Theorem \ref{thm:thm4} with $\alpha,\beta_1,$ and $\beta_2$ as in (\ref{eq:amat0a0}). Let $ I^{(2)}(c_3,\nu;\beta) $ be as in (\ref{eq:hlrd6}) and $L^{(2)}$ as in
(\ref{eq:thm4eq1}). Then 
 \begin{multline}
   \label{eq:thm6eq1}
\lim_{n\rightarrow \infty } \sqrt{n} L^{(2)}   = 
   \frac{1}{\sqrt{\beta_1 \beta_2}} 
   \min_{c_3>0,\nu_1,\nu_2} 
   \Bigg (\Bigg .
  -\frac{c_{3}}{2} 
  + 
  \frac{\alpha}{c_3}\log \lp
    I^{(2)}(c_3,\nu_1;\beta_1)
        \rp + \nu_1\sqrt{\beta_1} 
       \\
        +
\frac{1}{c_3}\log \lp
       I^{(2)}(c_3,\nu_2;\beta_2)
      \rp + \nu_2\sqrt{\beta_2}         \Bigg .\Bigg ) .
\end{multline}
and
\begin{eqnarray}
   \label{eq:thm6eq2}
\lim_{n\rightarrow \infty}    \mE \xi\sqrt{n}   \leq  \lim_{n\rightarrow \infty}   \mE L^{(2)} \sqrt{n} \leq  \lim_{n\rightarrow \infty}   \mE L \sqrt{n}.
\end{eqnarray}
\end{theorem}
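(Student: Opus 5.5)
The plan is to assemble the statement from Theorem \ref{thm:thm4} and the computations (\ref{eq:hlrd1})--(\ref{eq:hlrd7}); the inequality chain (\ref{eq:thm6eq2}) is literally (\ref{eq:thm4eq2}), so the work lies in justifying the closed-form limit (\ref{eq:thm6eq1}). I would proceed in three steps.

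\textbf{Step 1: linearize each half-maximum.} Start from the definition (\ref{eq:thm4eq1}) of $L^{(2)}$. The two maxima $\max_{\y\in\cS_{\y}}\h^T\y$ and $\max_{\x\in\cS_{\x}}\g^T\x$ decouple, and each is handled separately. By the Lagrangian argument (\ref{eq:hrd2})--(\ref{eq:hrd4}), each is a minimum over a scalar $\nu$ of a sum of i.i.d. terms plus a linear term; this is (\ref{eq:hlrd1}). Strong duality holds here because sorting shows the top-$k$ selection is exactly a linear program over the box relaxation.

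\textbf{Step 2: exchange $\min_\nu$ with $\frac{1}{c_3}\log\mE e^{c_3(\cdot)}$.} This is the delicate point. For fixed $\nu$ one has $\max\le$ (the $\nu$-expression), so $\frac1{c_3}\log\mE e^{c_3\max}\le \min_\nu \frac1{c_3}\log\mE e^{c_3(\cdot)}$. This direction already suffices for the upper bound $\mE\xi\sqrt n\le\lim\sqrt n\,\mE L^{(2)}$. For equality in the limit:
\begin{itemize}
\item after the rescaling $c_3\to c_3\sqrt n$, the exponent is $\sqrt n c_3$ times a quantity of order $\sqrt n$, so the log-moment generating function is a large-deviation functional of the empirical measure of $\h$;
\item the $\nu$-expression is convex in $\nu$, so a minimax/Varadhan argument (tilting the measure and applying the LLN under the tilt, as in the lifted RDT references \cite{Stojnictcmspnncapliftedrdt23}) justifies swapping, giving (\ref{eq:hlrd2})--(\ref{eq:hlrd3});
\item independence of the coordinates then factorizes the expectation into the $m$-th (respectively $n$-th) power of a single-coordinate expectation.
\end{itemize}

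\textbf{Step 3: take the scaled limit and compute the integral.} Substitute $c_3\to c_3\sqrt n$ and multiply by $\sqrt n/\sqrt{k_1k_2}$. With $m/n\to\alpha$, $k_i/n\to\beta_i$, the prefactors become $\alpha$ and $1$, and the per-coordinate exponent becomes $-\frac{c_3}{\sqrt{\beta}}\min(\h_1+\nu,0)$. This yields (\ref{eq:hlrd5}); rescaling $\nu$ by $\sqrt n$ is not needed since $\nu$ stays $O(1)$. Finally, compute
\[
I^{(2)}=P(\h_1\ge-\nu)+\int_{-\infty}^{-\nu}e^{-c_3(h+\nu)/\sqrt\beta}\phi(h)\,dh.
\]
Completing the square gives the $\erfc$ expression (\ref{eq:hlrd6}), which yields (\ref{eq:hlrd7}) and hence (\ref{eq:thm6eq1}).

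The main obstacle is Step 2, namely justifying the $\min_\nu$/log-MGF exchange and the passage of the limit inside $\min_{c_3,\nu_1,\nu_2}$. For the latter I would use convexity in $(\nu_1,\nu_2)$ together with uniform convergence on compacts, plus coercivity to confine the minimizers to a compact set.
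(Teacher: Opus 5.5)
Your proposal is correct and follows the paper's own route. The steps match one for one: the Lagrangian linearization (\ref{eq:hlrd1}); the exchange of $\min_{\nu}$ with $\frac{1}{c_3}\log\mE e^{c_3(\cdot)}$ plus factorization over i.i.d.\ coordinates, (\ref{eq:hlrd2})--(\ref{eq:hlrd3}); the rescaling $c_3\rightarrow c_3\sqrt{n}$ giving (\ref{eq:hlrd5}); and the explicit Gaussian integral (\ref{eq:hlrd6}), with (\ref{eq:thm6eq2}) inherited directly from Theorem \ref{thm:thm4}. You go beyond the paper on the $\min_{\nu}$/log-MGF exchange: the paper simply writes (\ref{eq:hlrd2})--(\ref{eq:hlrd3}) as equalities, while at finite $n$ only ``$\le$'' holds in general. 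You sketch a tilting/Varadhan argument for equality in the limit and rightly note that only the elementary ``$\le$'' direction is needed to bound $\mE\xi\sqrt{n}$ by the closed-form expression.
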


\begin{proof}
Follows from the preceding discussion.
\end{proof} 

The following corollary allows to obtain closed form solution for squared matrices.
 
\begin{corollary}
\label{cor:cor2}
 Assume the squared matrices setup of Theorem \ref{thm:thm6} with $\alpha=1$ and $\beta_1=\beta_2=\beta$. Set
\begin{eqnarray}
   \label{eq:cor2eq0}
b & = &  \frac{ 4 \left(1- \beta\right)  {\mathrm{e}}^{-\frac{\hat{\nu} ^2}{2}}}{\sqrt{2 \beta \pi }\, \mathrm{erfc}\left(\frac{-\,\hat{\nu} }{\sqrt{2}}\right)    } - 2\sqrt{\beta }\,\hat{\nu} 
\nonumber \\
\hat{c}_3 & = & \frac{-b + \sqrt{ b^2+12\log \lp  \frac{ \mathrm{erfc}\left(\frac{-\sqrt{2}\,\hat{\nu} }{2}\right)}{2(1- \beta )}
  \rp    } }{3}.
\end{eqnarray}
Let $\hat{\nu}$ satisfy
\begin{eqnarray}
   \label{eq:cor2eq0a0}
   {\mathrm{e}}^{-\frac{c_{3}\,\left(2\,\hat{\nu} -\frac{c_{3}}{\sqrt{\beta }}\right)}{2\,\sqrt{\beta }}}\,\mathrm{erfc}\left(\frac{\sqrt{2}\,\left(\hat{\nu} -\frac{c_{3}}{\sqrt{\beta }}\right)}{2}\right) 
& = &
\frac{\beta \, \mathrm{erfc}\left(\frac{-\sqrt{2}\,\hat{\nu} }{2}\right)}{1- \beta }
.
 \end{eqnarray}
 Then the optimal $\nu_1$ and $\nu_2$ in (\ref{eq:thm6eq1}) are equal to $\hat{\nu}$ and the optimal $c_3$ is equal to $\hat{c}_3$. Moreover,
 \begin{equation}
   \label{eq:cor2eq1}
\lim_{n\rightarrow \infty } \sqrt{n} L^{(2)}   = 
   \frac{1}{\beta} 
    \lp
 \hat{c}_3   + b    +2\hat{\nu}\sqrt{\beta}   \rp .
\end{equation}
and
\begin{eqnarray}
   \label{eq:cor2eq2}
\lim_{n\rightarrow \infty}    \mE \xi\sqrt{n}   \leq  \lim_{n\rightarrow \infty}   \mE L^{(2)} \sqrt{n} \leq  \lim_{n\rightarrow \infty}   \mE L \sqrt{n}
=
     \sqrt{\frac{2}{\pi\beta^3}}  e^{-\lp \erfinv(1-2\beta)\rp^2} 
.
\end{eqnarray}
  \end{corollary}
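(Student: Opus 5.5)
The plan is to specialize Theorem \ref{thm:thm6} to $\alpha=1$, $\beta_1=\beta_2=\beta$ and solve the three-variable minimization in (\ref{eq:thm6eq1}) through its first-order conditions. The inequality chain (\ref{eq:cor2eq2}) then follows from (\ref{eq:thm6eq2}), with the last equality being Corollary \ref{cor:cor1}.

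\textbf{Step 1: reduce to $\nu_1=\nu_2$.} With $\alpha=1$ and equal $\beta$'s, the objective for fixed $c_3$ splits into two identical functions, one of $\nu_1$ and one of $\nu_2$. Each is convex in $\nu$: $\nu\mapsto -\min(h+\nu,0)$ is convex, and $\frac{1}{c_3}\log\mE e^{c_3(\cdot)}$ preserves convexity by Hölder. Hence both optimizers coincide at a common $\hat{\nu}$. The reduced objective is
$$\frac{1}{\beta}\Big(-\frac{c_3}{2}+\frac{2}{c_3}\log I^{(2)}(c_3,\nu;\beta)+2\nu\sqrt{\beta}\Big).$$

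\textbf{Step 2: stationarity in $\nu$.} Write $I^{(2)}=T_1+T_2$, with
$$T_1=\tfrac12\mathrm{erfc}\big(-\tfrac{\nu}{\sqrt2}\big),$$
and $T_2$ the second term in (\ref{eq:hlrd6}). Differentiate $T_2$ in $\nu$: the Gaussian-density piece coming from the erfc combines with the exponential prefactor into exactly $-e^{-\nu^2/2}/\sqrt{2\pi}$. This cancels $\partial_\nu T_1$, leaving
$$\partial_\nu I^{(2)}=-\frac{c_3}{\sqrt\beta}\,T_2 .$$
Setting the $\nu$-derivative to zero gives $T_2=\beta(T_1+T_2)$, i.e. $T_2=\beta T_1/(1-\beta)$, which is precisely (\ref{eq:cor2eq0a0}). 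As a byproduct, at the optimum
$$I^{(2)}=\frac{T_1}{1-\beta}=\frac{\mathrm{erfc}(-\hat\nu/\sqrt2)}{2(1-\beta)}.$$

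\textbf{Step 3: stationarity in $c_3$.} The same completing-the-square trick, together with $T_2/I^{(2)}=\beta$, gives
$$\frac{\partial_{c_3}I^{(2)}}{I^{(2)}}=c_3-\hat\nu\sqrt\beta+\frac{(1-\beta)e^{-\hat\nu^2/2}}{\sqrt{2\pi\beta}\,T_1}=c_3+\frac b2 .$$
The condition
$$-\frac12-\frac{2}{c_3^2}\log I^{(2)}+\frac{2}{c_3}\,\frac{\partial_{c_3}I^{(2)}}{I^{(2)}}=0$$
then becomes the quadratic
$$\tfrac32 c_3^2+b\,c_3-2\log I^{(2)}=0 .$$
Its positive root is $\hat c_3$ in (\ref{eq:cor2eq0}), using the expression for $I^{(2)}$ from Step 2.

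\textbf{Step 4: evaluate the optimum.} Substitute $\frac{2}{\hat c_3}\log I^{(2)}=\frac32\hat c_3+b$ into the reduced objective. This collapses it to
$$\frac1\beta\big(\hat c_3+b+2\hat\nu\sqrt\beta\big),$$
which is (\ref{eq:cor2eq1}).

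\textbf{Main obstacle.} The delicate part is justifying that the stationary point is the global minimizer and that it exists. Steps 2 and 3 are coupled, since $\hat\nu$ depends on $c_3$ through (\ref{eq:cor2eq0a0}) and $b$ depends on $\hat\nu$. I would handle this by minimizing over $\nu$ first using convexity, so the minimizer for each fixed $c_3$ is unique, and then arguing that the resulting one-dimensional function of $c_3$ has a unique interior critical point. For the latter, I would compare the limit $c_3\to0$, which recovers the plain bound, with the regime $c_3\to\infty$, where the $-c_3/2$ term dominates. If a clean monotonicity argument fails, I would state the corollary at the level of a stationary point, which still yields a valid upper bound since any admissible $(c_3,\nu_1,\nu_2)$ does.
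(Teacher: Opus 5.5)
Your proposal is correct and follows the paper's proof essentially step for step. The reduction to $\nu_1=\nu_2$, the cancellation giving $\partial_\nu I^{(2)}=-\frac{c_3}{\sqrt{\beta}}T_2$ and hence (\ref{eq:cor2eq0a0}) with $I^{(2)}=\frac{\mathrm{erfc}(-\hat{\nu}/\sqrt{2})}{2(1-\beta)}$, the quadratic $\frac{3}{2}c_3^2+bc_3-2\log I^{(2)}=0$ from $c_3$-stationarity, and the substitution yielding $\hat{c}_3+b+2\hat{\nu}\sqrt{\beta}$ are exactly what the paper does. Your convexity argument for $\nu_1=\nu_2$ and your caveat go beyond the paper, which simply takes the first-order conditions as identifying the optimum. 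That caveat is that the stationary point is only guaranteed to give a valid upper bound, not necessarily the global minimizer.
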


\begin{proof}
One first observes that in the square case $\nu_1=\nu_2$. Then (\ref{eq:thm6eq1}) becomes 
 \begin{equation}
   \label{eq:prcor2eq1}
\lim_{n\rightarrow \infty } \sqrt{n} L^{(2)}   = 
   \frac{1}{\beta} 
   \min_{c_3>0,\nu } 
   \Bigg (\Bigg .
  -\frac{c_{3}}{2} 
  + 
  \frac{2}{c_3}\log \lp
    I^{(2)}(c_3,\nu;\beta)
        \rp + 2\nu\sqrt{\beta} 
   \Bigg .\Bigg ) 
   \triangleq
   \frac{1}{\beta} 
   \min_{c_3>0,\nu } f^{(2)}(c_3,\nu;\beta)
   .
\end{equation}
Taking the $\nu$ derivative gives
\begin{eqnarray}
   \label{eq:prcor2eq2}
\frac{dI^{(2)}(c_3,\nu;\beta)}{d\nu}
=
 -\frac{c_{3}\,{\mathrm{e}}^{-\frac{c_{3}\,\left(2\,\nu -\frac{c_{3}}{\sqrt{\beta }}\right)}{2\,\sqrt{\beta }}}\,\mathrm{erfc}\left(\frac{\sqrt{2}\,\left(\nu -\frac{c_{3}}{\sqrt{\beta }}\right)}{2}\right)}{2\,\sqrt{\beta }},
\end{eqnarray}
and
\begin{eqnarray}
   \label{eq:prcor2eq3}
\frac{df^{(2)}(c_3,\nu;\beta)}{d\nu}
& = &  \frac{2}{c_3 I^{(2)}(c_3,\nu;\beta)} \frac{dI^{(2)}(c_3,\nu;\beta)}{d\nu}
+2\sqrt{\beta}.
\end{eqnarray}
Equalling the above derivative to zero and combining with (\ref{eq:hlrd6}) and (\ref{eq:prcor2eq3}), we find
 \begin{eqnarray}
   \label{eq:prcor2eq4}
\frac{c_{3}\,{\mathrm{e}}^{-\frac{c_{3}\,\left(2\,\nu -\frac{c_{3}}{\sqrt{\beta }}\right)}{2\,\sqrt{\beta }}}\,\mathrm{erfc}\left(\frac{\sqrt{2}\,\left(\nu -\frac{c_{3}}{\sqrt{\beta }}\right)}{2}\right)}{2\,\sqrt{\beta }}
& = & \sqrt{\beta} c_3 I^{(2)}(c_3,\nu;\beta)
\nonumber \\
& = & \sqrt{\beta} c_3  
  \lp   \frac{\mathrm{erfc}\left(-\frac{\sqrt{2}\,\nu }{2}\right)}{2}
      +\frac{{\mathrm{e}}^{-\frac{c_{3}\,\left(2\,\nu -\frac{c_{3}}{\sqrt{\beta }}\right)}{2\,\sqrt{\beta }}}\,\mathrm{erfc}\left(\frac{\sqrt{2}\,\left(\nu -\frac{c_{3}}{\sqrt{\beta }}\right)}{2}\right)}{2} \rp , \nonumber \\
 \end{eqnarray}
and
 \begin{eqnarray}
   \label{eq:prcor2eq5}
 {\mathrm{e}}^{-\frac{c_{3}\,\left(2\,\nu -\frac{c_{3}}{\sqrt{\beta }}\right)}{2\,\sqrt{\beta }}}\,\mathrm{erfc}\left(\frac{\sqrt{2}\,\left(\nu -\frac{c_{3}}{\sqrt{\beta }}\right)}{2}\right) 
 & = &    
     \frac{\beta \, \mathrm{erfc}\left(-\frac{\sqrt{2}\,\nu }{2}\right)}{1-\beta}
   .
 \end{eqnarray}
Moreover
 \begin{eqnarray}
   \label{eq:prcor2eq5a0}
I^{(2)}(c_3,\nu;\beta)
  & = &    
     \frac{ \mathrm{erfc}\left(-\frac{\sqrt{2}\,\nu }{2}\right)}{2(1-\beta)}
   .
 \end{eqnarray}
Taking $c_3$ derivative gives
 \begin{eqnarray}
   \label{eq:prcor2eq6}
   \frac{dI^{(2)}(c_3,\nu;\beta)}{dc_3}
 & = &
 \frac{{\mathrm{e}}^{-\frac{c_{3}\,\left(2\,\nu -\frac{c_{3}}{\sqrt{\beta }}\right)}{2\,\sqrt{\beta }}}\,\mathrm{erfc}\left(\frac{\sqrt{2}\,\left(\nu -\frac{c_{3}}{\sqrt{\beta }}\right)}{2}\right)\,\left(\frac{c_{3}}{\beta }-\frac{\nu }{\sqrt{\beta }}\right)}{2}
+\frac{ e^{-\frac{\nu^2}{2}} }{\sqrt{2 \beta \pi }  } 
\nonumber \\
 & = &
 \frac{      \frac{\beta \, \mathrm{erfc}\left(-\frac{\sqrt{2}\,\nu }{2}\right)}{1-\beta} \left(\frac{c_{3}}{\beta }-\frac{\nu }{\sqrt{\beta }}\right)}{2}
+\frac{ e^{-\frac{\nu^2}{2}} }{\sqrt{2 \beta \pi }  } 
\nonumber \\
 & = &
I^{(2)}(c_3,\nu;\beta) \left( c_{3}- \nu \sqrt{\beta }\right) 
+\frac{ e^{-\frac{\nu^2}{2}} }{\sqrt{2 \beta \pi }  } 
,
\end{eqnarray}
and
 \begin{eqnarray}
   \label{eq:prcor2eq7}
   \frac{df^{(2)}(c_3,\nu;\beta)}{dc_3}
 & = &
  -\frac{1}{2} 
  - 
  \frac{2}{c_3^2}\log \lp
    I^{(2)}(c_3,\nu;\beta)
        \rp +   
  \frac{2}{c_3 I^{(2)}(c_3,\nu;\beta)} \frac{d
    I^{(2)}(c_3,\nu;\beta) }{dc_3}
\nonumber \\
& = &
  -\frac{1}{2} 
  - 
  \frac{2}{c_3^2}\log \lp
    I^{(2)}(c_3,\nu;\beta)
        \rp +   
  \frac{2}{c_3 I^{(2)}(c_3,\nu;\beta)} 
   \lp
   I^{(2)}(c_3,\nu;\beta) \left( c_{3}- \nu \sqrt{\beta }\right) 
+\frac{ e^{-\frac{\nu^2}{2}} }{\sqrt{2 \beta \pi }  } 
   \rp
\nonumber \\
& = &
  \frac{3}{2} 
  - 
  \frac{2}{c_3^2}\log \lp
    I^{(2)}(c_3,\nu;\beta)
        \rp +   
  \frac{2}{c_3 } 
   \lp
  - \nu \sqrt{\beta }  
+\frac{ e^{-\frac{\nu^2}{2}} }{\sqrt{2 \beta \pi }  I^{(2)}(c_3,\nu;\beta) } 
   \rp
\nonumber \\
& = &
  \frac{3}{2} 
  - 
  \frac{2}{c_3^2}\log \lp
    I^{(2)}(c_3,\nu;\beta)
        \rp +   
  \frac{1}{c_3 } 
   \lp
  - 2\nu \sqrt{\beta }  
+\frac{4(1-\beta) e^{-\frac{\nu^2}{2}} }{\sqrt{2 \beta \pi } \, \mathrm{erfc}\left(-\frac{\sqrt{2}\,\nu }{2}\right) } 
   \rp
\nonumber \\
& = &
  \frac{3}{2} 
  - 
  \frac{2}{c_3^2}\log \lp
 I^{(2)}(c_3,\nu;\beta)        \rp +   
  \frac{1}{c_3 } 
b
\nonumber \\
& = &
  \frac{3}{2} 
  - 
  \frac{2}{c_3^2}\log \lp
     \frac{ \mathrm{erfc}\left(-\frac{\sqrt{2}\,\nu }{2}\right)}{2(1-\beta)}
        \rp +   
  \frac{1}{c_3 } 
b .
  \end{eqnarray}
Equalling the above derivative to zero and solving over $c_3$ gives
\begin{eqnarray}
   \label{eq:prcor2eq8}
c_3 & = & \frac{-b + \sqrt{ b^2+12\log \lp  \frac{ \mathrm{erfc}\left(\frac{-\sqrt{2}\,\nu }{2}\right)}{2(1- \beta )}
  \rp    } }{3}.
\end{eqnarray}
Utilizing (\ref{eq:prcor2eq7}), we then also observe 
\begin{eqnarray}
   \label{eq:prcor2eq9}
  f^{(2)}(c_3,\nu;\beta) =  
  -\frac{c_{3}}{2} 
  + 
  \frac{2}{c_3}\log \lp
    I^{(2)}(c_3,\nu;\beta)
        \rp + 2\nu\sqrt{\beta}  
=  
  -\frac{c_{3}}{2} 
  + \frac{3c_{3}}{2} +b
 + 2\nu\sqrt{\beta}
 =  
c_3 +b
 + 2\nu\sqrt{\beta}.
 .  
\end{eqnarray}
Comparing (\ref{eq:prcor2eq8}), (\ref{eq:prcor2eq5}), and  (\ref{eq:prcor2eq9}) to 
(\ref{eq:cor2eq0}), (\ref{eq:cor2eq0a0}), and (\ref{eq:cor2eq1}), while keeping in mind (\ref{eq:prcor2eq1}), completes the proof.
\end{proof}

We also have the following $\beta\rightarrow 0$ limiting corollary.

\begin{corollary}
\label{cor:cor3}
As in Corollary \ref{cor:cor2},  assume the squared matrices setup of Theorem \ref{thm:thm6} with $\alpha=1$ and $\beta_1=\beta_2=\beta$. Additionally, let $\beta\rightarrow 0$. Then
\begin{equation}
   \label{eq:cor3eq1}
\lim_{n\rightarrow \infty } \sqrt{n} L^{(2)}   \longrightarrow  
 2\sqrt{\frac{1}{\beta} \log\lp \frac{1}{\beta}\rp  },
\end{equation}
and
\begin{eqnarray}
   \label{eq:cor3eq2}
\lim_{n\rightarrow \infty}    \mE \xi\sqrt{n}   \leq  \lim_{n\rightarrow \infty}   \mE L^{(2)} \sqrt{n} \longrightarrow  
 2\sqrt{\frac{1}{\beta} \log\lp \frac{1}{\beta}\rp  }.
\end{eqnarray}
  \end{corollary}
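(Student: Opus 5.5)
The plan is to work directly with the square-case representation from the proof of Corollary \ref{cor:cor2}, namely $\lim_{n\rightarrow\infty}\sqrt{n}L^{(2)}=\frac{1}{\beta}\min_{c_3>0,\nu} f^{(2)}(c_3,\nu;\beta)$ with
\[
f^{(2)}(c_3,\nu;\beta)=-\frac{c_3}{2}+\frac{2}{c_3}\log\lp I^{(2)}(c_3,\nu;\beta)\rp+2\nu\sqrt{\beta},
\]
and $I^{(2)}$ as in (\ref{eq:hlrd6}). I will then run a Laplace-type asymptotic analysis as $\beta\rightarrow 0$, with $L\triangleq\log(1/\beta)$. The second claim (\ref{eq:cor3eq2}) is then immediate from (\ref{eq:cor3eq1}) and Theorem \ref{thm:thm6}, so all the work is in (\ref{eq:cor3eq1}).

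\emph{Step 1 (rescaling).} Mimicking the proof of Corollary \ref{cor:cor1a0}, I expect $\hat{\nu}=\Theta(\sqrt{L})$ and $\hat{c}_3/\sqrt{\beta}=\Theta(\sqrt{L})$. I therefore substitute $c_3=\tau\sqrt{\beta L}$ and $\nu=a\sqrt{L}$. With $t=c_3/\sqrt{\beta}=\tau\sqrt{L}$, the formula (\ref{eq:hlrd6}) gives
\[
I^{(2)}=1-\tfrac12\erfc\lp \nu/\sqrt2\rp+e^{t^2/2-t\nu}\,\tfrac12\erfc\lp (\nu-t)/\sqrt2\rp .
\]
Using the Gaussian tail asymptotics already used in (\ref{eq:rseq2}), $\log I^{(2)}\approx I^{(2)}-1$ behaves, up to polynomial prefactors in $\sqrt{L}$, like $\beta^{a^2/2}\lp e^{\Theta}-1\rp$ when $\tau<a$. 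The prefactors must be tracked, because the exponent of $\beta$ is exactly what gets balanced. Dividing by $\beta$, the objective becomes
\[
\frac{1}{\sqrt{\beta}}\lp -\frac{\tau\sqrt{L}}{2}+2a\sqrt{L}\rp+\frac{2\,(I^{(2)}-1)}{\tau\,\beta^{3/2}\sqrt{L}} .
\]
The last term stays of order $\sqrt{L/\beta}$ (rather than exploding or being negligible) only if $I^{(2)}-1$ is of order $\beta L$. This forces the exponent balance $a^2/2=1$ up to $\log L$ corrections, i.e.\ a one-parameter family of candidate optimizers.

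\emph{Step 2 (reduced variational problem).} Plugging this balance in, I obtain a limiting scalar problem: minimize over $\tau$ and over the $O(1)$ correction to $a$ a function $\phi(\tau,\cdot)$ such that $\frac{1}{\beta}f^{(2)}\approx\sqrt{L/\beta}\,\phi$. I will solve it using the stationarity equations (\ref{eq:prcor2eq5}) and (\ref{eq:prcor2eq8}), read asymptotically. In particular, (\ref{eq:prcor2eq5}) fixes $e^{t^2/2-t\nu}\erfc((\nu-t)/\sqrt2)\approx 2\beta$, which pins down the relation between $\tau$ and $a$. The expected outcome is $\phi_{\min}=2$, i.e.\ a factor $\sqrt2$ below the plain value $2\sqrt{2L/\beta}$ from Corollary \ref{cor:cor1a0}. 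In the expression $\hat c_3+b+2\hat\nu\sqrt\beta$ of (\ref{eq:cor2eq1}), I expect the term $b$ to be dominated by $\frac{4e^{-\hat\nu^2/2}}{\sqrt{2\pi\beta}}\approx$ const$\cdot\sqrt{\beta L}$, which I will use as a consistency check.

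\emph{Step 3 (matching upper and lower limits).} For the upper estimate of the limit, evaluating $f^{(2)}$ at the explicit asymptotic choice $(\tau^*,a^*)$ from Step 2 suffices, since we are minimizing. For the lower estimate, I need that no other scaling regime does better. I will split into cases: $\tau\ge a$, where the exponential term $e^{L(\tau^2/2-a\tau)}$ dominates and $\log I^{(2)}$ is no longer small; $c_3$ not of order $\sqrt{\beta L}$; and $\nu$ not of order $\sqrt{L}$. In each case I will show the objective is at least $(2+o(1))\sqrt{L/\beta}$, using convexity-type monotonicity of $\frac{1}{c_3}\log \mE e^{c_3(\cdot)}$ in $c_3$ to control extreme $c_3$.

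The main obstacle is Step 2 together with the case analysis of Step 3. The leading exponents cancel exactly, so the constant $2$ depends on subleading $\sqrt{L}$ prefactors of the erfc tails, and these must be tracked carefully rather than discarded as in a crude exponential-order computation.
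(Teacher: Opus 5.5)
There is a genuine gap. The scaling regime you analyze in Steps 1--2 does not contain the minimizer, and within it the best value you can reach is $\frac{3}{\sqrt2}\sqrt{L/\beta}\approx 2.12\sqrt{L/\beta}$, not $2\sqrt{L/\beta}$. For $\tau<a$ you correctly find that $I^{(2)}-1$ is $\beta^{a^2/2}$ up to polynomial factors in $\sqrt L$. Non-explosion of the $\log I^{(2)}$ term then forces $a\ge\sqrt2-o(1)$. Since $\log I^{(2)}\ge 0$, your rescaled objective is at least $\sqrt{L/\beta}\,(-\tau/2+2a)\ge \sqrt{L/\beta}\,\frac{3a}{2}$, and this bound is essentially attained as $\tau\uparrow a=\sqrt2$. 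So no choice of $\tau$, or of the $O(1)$ correction to $a$, produces the constant $2$.

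The optimum actually sits in $a<\tau<2a$, i.e.\ $t=c_3/\sqrt\beta>\nu$. There $\erfc((\nu-t)/\sqrt2)\to 2$ and $I^{(2)}-1\approx e^{t^2/2-t\nu}=\beta^{\tau(a-\tau/2)}\to 0$. Hence your Step 3 claim that for $\tau\ge a$ ``$\log I^{(2)}$ is no longer small'' is false whenever $\tau<2a$: the case you plan to discard is exactly where the answer lives. In that regime the non-explosion constraint is $\tau(a-\tau/2)\ge 1$, not $a^2/2\ge 1$. Minimizing $-\tau/2+2a$ over $a\ge\frac1\tau+\frac\tau2$ gives $\frac\tau2+\frac2\tau\ge 2$, with equality at $\tau=2$, $a=\frac32$. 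This means $\hat\nu\approx\frac32\sqrt L$ and $\hat c_3\approx 2\sqrt{\beta L}=\frac43\sqrt\beta\,\hat\nu$, which is precisely the paper's solution.

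The same misplacement shows up elsewhere in your plan.
\begin{itemize}
\item Reading (\ref{eq:prcor2eq5}) as $e^{t^2/2-t\nu}\erfc((\nu-t)/\sqrt2)\approx 2\beta$ ``pins down a relation between $\tau$ and $a$'' only when $t>\nu$, where it becomes $\tau(a-\tau/2)=1$. For $t<\nu$ it just returns $a\approx\sqrt2$, so your Steps 1 and 2 contradict each other.
\item Your consistency check on $b$ points the wrong way. At the true optimizer $e^{-\hat\nu^2/2}=\beta^{9/8}$, so the first term of $b$ is $O(\beta^{5/8})$ and $b\approx-2\sqrt\beta\,\hat\nu$. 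This is exactly the ansatz the paper verifies: (\ref{eq:prcor2eq8}) then gives $\hat c_3\approx-2b/3$, and the $\nu$-stationarity equation reduces to $e^{-\frac49\hat\nu^2}\approx\beta$.
\item The constant $2$ does not hinge on subleading erfc prefactors. It comes from the leading-order constrained minimization above. At the optimum $I^{(2)}\approx 1+\beta$, so the $\frac{2}{c_3}\log I^{(2)}$ term contributes only $O(1/\sqrt{\beta L})$ to $\frac1\beta f^{(2)}$.
\end{itemize}

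Your overall framework (rescaled variational asymptotics plus a case-split lower bound) is a reasonable alternative to the paper's self-consistent stationarity ansatz, and potentially more complete. It only works once the regime is corrected.
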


\begin{proof}
We start by assuming that $b$ and $\hat{c}_3$ in (\ref{eq:cor2eq0}) behaves as
\begin{eqnarray}
   \label{eq:prfcor3eq1}
b & = &  \frac{ 4 \left(1- \beta\right)  {\mathrm{e}}^{-\frac{\hat{\nu} ^2}{2}}}{\sqrt{2 \beta \pi }\, \mathrm{erfc}\left(\frac{-\,\hat{\nu} }{\sqrt{2}}\right)    } - 2\sqrt{\beta }\,\hat{\nu} 
\longrightarrow 
- 2\sqrt{\beta }\,\hat{\nu} 
\nonumber \\
\hat{c}_3 & = & \frac{-b + \sqrt{ b^2+12\log \lp  \frac{ \mathrm{erfc}\left(\frac{-\sqrt{2}\,\hat{\nu} }{2}\right)}{2(1- \beta )}
  \rp    } }{3}
  \longrightarrow 
  -\frac{2b}{3} \longrightarrow   \frac{4}{3}\sqrt{\beta }\,\hat{\nu} 
.
\end{eqnarray}
Assuming $\hat{\nu}\longrightarrow \infty$, we observe
\begin{eqnarray}
   \label{eq:prfcor3eq2}
   {\mathrm{e}}^{-\frac{c_{3}\,\left(2\,\hat{\nu} -\frac{c_{3}}{\sqrt{\beta }}\right)}{2\,\sqrt{\beta }}}\,\mathrm{erfc}\left(\frac{\sqrt{2}\,\left(\hat{\nu} -\frac{c_{3}}{\sqrt{\beta }}\right)}{2}\right) 
&   \longrightarrow &
   {\mathrm{e}}^{- \frac{4}{9}\hat{\nu}^2 } \,\mathrm{erfc}\left(\frac{-\sqrt{2}\,  \hat{\nu} }{6}\right)    \longrightarrow
  2 {\mathrm{e}}^{- \frac{4}{9}\hat{\nu}^2 } 
\nonumber \\
 \frac{\beta \, \mathrm{erfc}\left(\frac{-\sqrt{2}\,\hat{\nu} }{2}\right)}{1- \beta }
&   \longrightarrow &
2\beta.
 \end{eqnarray}
A combination of  (\ref{eq:cor2eq0a0}) and (\ref{eq:prfcor3eq1})  gives
\begin{eqnarray}
   \label{eq:prfcor3eq3}
  2 {\mathrm{e}}^{- \frac{4}{9}\hat{\nu}^2 } 
 \longrightarrow 2\beta \quad \mbox{and} \quad
 \hat{\nu} \longrightarrow \frac{3}{2} \sqrt{\log\lp \frac{1}{\beta} \rp} \longrightarrow \infty,
 \end{eqnarray}
 which confirms $\hat{\nu} \longrightarrow \infty$ assumption. For $\hat{\nu}$ from (\ref{eq:prfcor3eq3}), $b$ in  (\ref{eq:prfcor3eq3a0}) can be rewritten as
  \begin{eqnarray}
   \label{eq:prfcor3eq3a0}
b & = &  
\frac{ 4 \left(1- \beta\right)  {\mathrm{e}}^{-\frac{\hat{\nu} ^2}{2}}}{\sqrt{2 \beta \pi }\, \mathrm{erfc}\left(\frac{-\,\hat{\nu} }{\sqrt{2}}\right)    } - 2\sqrt{\beta }\,\hat{\nu} 
\longrightarrow 
\frac{ 2 \left(1- \beta\right)  {\mathrm{e}}^{\frac{9}{8} \log\lp \beta \rp  } }  {\sqrt{2 \beta \pi }   } - 2\sqrt{\beta }\,\hat{\nu} 
\longrightarrow 
\frac{ 2 \left(1- \beta\right)  \beta^{\frac{5}{8}}  }  {\sqrt{2 \pi }   } - 2\sqrt{\beta }\,\hat{\nu} 
\longrightarrow 
- 2\sqrt{\beta }\,\hat{\nu} 
 .\nonumber \\
\end{eqnarray}
Similarly $\hat{c}_3$ in  (\ref{eq:prfcor3eq3a0}) can be rewritten as
  \begin{eqnarray}
   \label{eq:prfcor3eq3a1}
 \hat{c}_3 & = & \frac{-b + \sqrt{ b^2+12\log \lp  \frac{ \mathrm{erfc}\left(\frac{-\sqrt{2}\,\hat{\nu} }{2}\right)}{2(1- \beta )}
  \rp    } }{3}
  \longrightarrow 
 \frac{-b + \sqrt{ b^2+12\log \lp  1 - \frac{e^{-\frac{\hat{\nu}^2}{2} }}{\sqrt{2\pi} \hat{\nu}}  
  \rp    } }{3}
\nonumber \\
&  \longrightarrow &
 \frac{-b + \sqrt{ b^2 - 12\frac{e^{-\frac{\hat{\nu}^2}{2} }}{\sqrt{2\pi} \hat{\nu}}  
     } }{3}
  \longrightarrow 
 \frac{-b + \sqrt{ b^2 - 12\frac{\beta^{\frac{9}{8}}}{\sqrt{2\pi} \hat{\nu}}  
     } }{3}
 \longrightarrow 
 \frac{-b + |b| \sqrt{ 1 - 3\frac{\beta^{\frac{1}{8}}}{\sqrt{2\pi} \hat{\nu}^3}  
     } }{3}
\nonumber \\
&   \longrightarrow  &
   -\frac{2b}{3} \longrightarrow   \frac{4}{3}\sqrt{\beta }\,\hat{\nu} .
\end{eqnarray}
Together, (\ref{eq:prfcor3eq3a0}) and    (\ref{eq:prfcor3eq3a1}) confirm  that the assumption made in (\ref{eq:prfcor3eq1}) is indeed correct. From (\ref{eq:cor2eq1}), (\ref{eq:prfcor3eq1}), and  (\ref{eq:prfcor3eq3}), we then find
\begin{equation}
   \label{eq:prfcor2eq4}
\lim_{n\rightarrow \infty } \sqrt{n} L^{(2)}   = 
   \frac{1}{\beta} 
    \lp
 \hat{c}_3   + b    +2\hat{\nu}\sqrt{\beta}   \rp 
 \longrightarrow \frac{4}{3\sqrt{\beta}} \hat{\nu}
 \longrightarrow  2 \sqrt{  \frac{1}{\beta} \log\lp \frac{1}{\beta} \rp}
 ,
\end{equation}
which matches (\ref{eq:cor3eq1}) and completes the proof. 
\end{proof}

Comparing to Corollary \ref{cor:cor1a0}, we observe that the lifting effect is significant and the plain RDT bound is lowered precisely $\sqrt{2}$ times for $\beta\rightarrow 0$.

We also note that the results obtained in Corollary \ref{cor:cor3} precisely match the ones obtained via replica methods in \cite{ErbaKOZ24} assuming one step of replica symmetry breaking. Moreover, recognizing
\begin{equation}\label{eq:mcteq1}
  2 \sqrt{  \frac{1}{\beta} \log\lp \frac{1}{\beta} \rp}
  \longrightarrow
  2 \sqrt{  \frac{n}{k} \log\lp \frac{n}{k} \rp}
  =
  2 \sqrt{n} \sqrt{  \frac{1}{k}\lp \log (n) -\log(k)  \rp } .
\end{equation}
and assuming that $k$ is large (either as completely independent of $n$ or as a sublinear function of $n$), one also has
\begin{equation}\label{eq:mcteq2}
   2 \sqrt{n} \sqrt{  \frac{1}{k} \lp \log (n) -\log(k)  \rp } 
   \longrightarrow 
      2 \sqrt{n} \sqrt{  \frac{\log (n)}{k}  } .
\end{equation}
The term on the right-hand side is precisely what was obtained in sublinear regime in \cite{BhamidiDN17,GamarnikLi18}.

   \begin{figure}[h]
%\begin{minipage}[b]{.5\linewidth}
\centering
\centerline{\includegraphics[width=.87\linewidth]{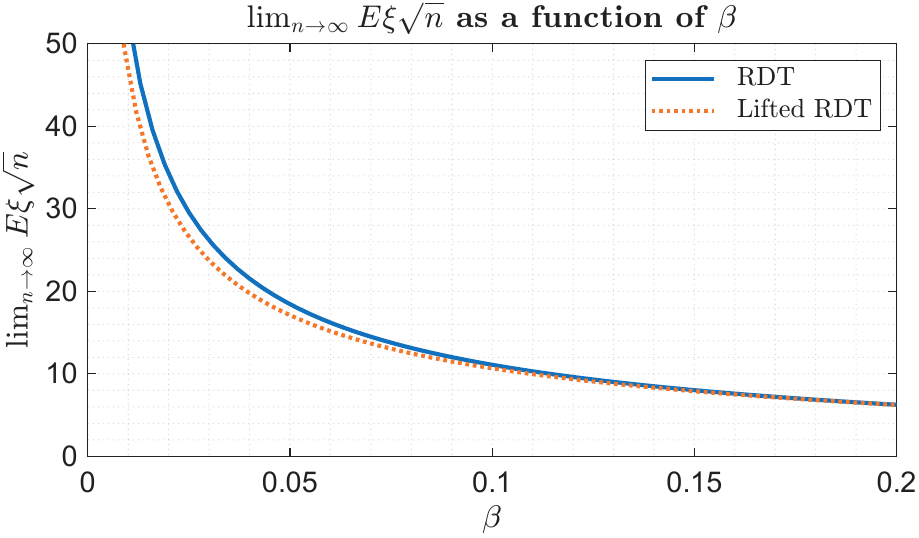}}
%\end{minipage}
%\begin{minipage}[b]{.5\linewidth}
%\centering
%\centerline{\epsfig{figure=finprerral08.eps,width=9cm,height=6.5cm}}
%\end{minipage}
\caption{ RDT and Lifted RDT upper bounds on the Largest average submatrix value $\lim_{n\rightarrow \infty} \mE \xi\sqrt{n}$  as functions of $\beta$.}
\label{fig:fig1}
\end{figure}

The results established in Corollaries \ref{cor:cor1} and \ref{cor:cor2} are sufficient to obtain concrete upper bounds on the largest average submatrix values, as shown in Figure \ref{fig:fig1}. As demonstrated, the lifted RDT improves upon the plain RDT, confirming that the strong RDT is not in place. We have particularly highlighted the range $\beta\in(0,0.2)$, as the lifting effect begins to take place at $\beta\approx 0.24$.

   \begin{figure}[h]
%\begin{minipage}[b]{.5\linewidth}
\centering
\centerline{\includegraphics[width=.87\linewidth]{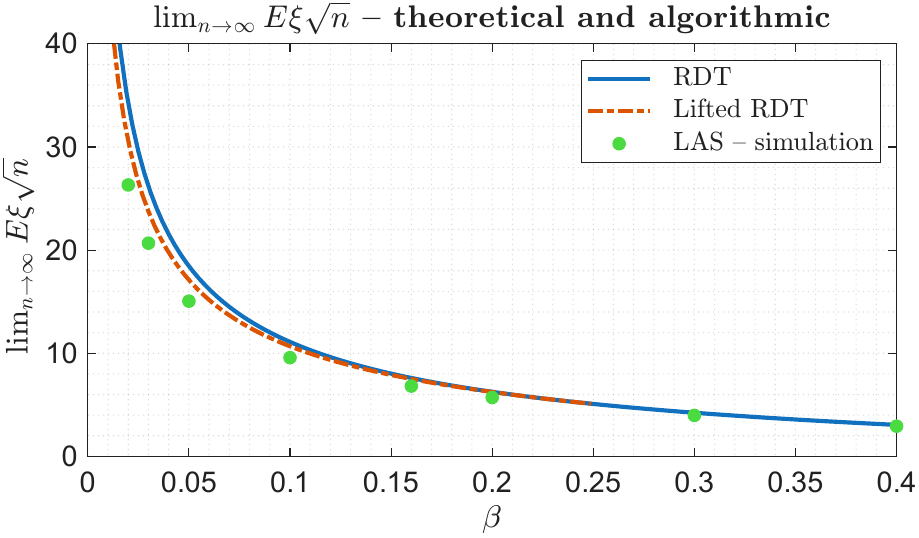}}
%\end{minipage}
%\begin{minipage}[b]{.5\linewidth}
%\centering
%\centerline{\epsfig{figure=finprerral08.eps,width=9cm,height=6.5cm}}
%\end{minipage}
\caption{ Largest average submatrix value $\lim_{n\rightarrow \infty} \mE \xi\sqrt{n}$  -- theoretical (RDT and Lifted RDT) versus algorithmic (LAS).}
\label{fig:fig2}
\end{figure}

We implemented a simple Largest Average Submatrix (LAS) algorithm \cite{ShabalinWPN09}. Its results are shown in Figure \ref{fig:fig2}. Despite being a suboptimal heuristic, LAS fairly closely approaches  the theoretical RDT predictions. Additionally, we chose a slightly wider range for $\beta$ to demonstrate that the algorithm performs even better and more closely approaches the theoretical upper bounds in the higher $\beta$ regime.

Figures \ref{fig:fig1} and  \ref{fig:fig2} are supplemented with Tables \ref{tab:tab1} and \ref{tab:tab2} where concrete values of all relevant parameters are provided. Table \ref{tab:tab1} relates to larger values of $\beta$ whereas Table \ref{tab:tab2} relates to smaller values of $\beta$ where lifting effect is more pronounced. As can be seen, for $\beta\geq 0.7$ the LAS algorithmic values are within $0.99$ of the RDT theoretical estimates indicating that the statistical-computational gap is either completely absent or its presence is for all practical purposes irrelevant. 

\begin{table}[h]
\caption{LASP -- Theory (RDT) versus Simulation (LAS) ; $m=n=3000$; Higher $\beta$ regime ($ \beta=\lim_{n\rightarrow \infty}\frac{k}{n}$) }\vspace{.1in}
%\begin{adjustwidth}{-1.4cm}{}
\centering
\def\arraystretch{1.2}
\begin{tabular}{||c||c|c||c|c||c|c||}\hline\hline
$\beta$  &   \multicolumn{2}{c||}{$ 0.5 $} &  \multicolumn{2}{c||}{$ 0.7 $} &  \multicolumn{2}{c||}{$ 0.88 $}   \\
  \hline\hline 
Theory vs Simulation  & \multicolumn{1}{c|}{\textbf{Theory}}
& \multicolumn{1}{c||}{\bl{\textbf{Sim.}}}
  & \multicolumn{1}{c|}{\textbf{Theory}}
& \multicolumn{1}{c||}{\bl{\textbf{Sim.}}}
  & \multicolumn{1}{c|}{\textbf{Theory}}
& \multicolumn{1}{c||}{\bl{\textbf{Sim.}}}
   \\
  \hline 
Methodology   &  RDT &    LAS 
&   RDT &     LAS  
&   RDT &     LAS  
 \\
\hline\hline
$\nu$ & $0 $ & $ -- $ 
 & $-0.524$  & $ -- $  & $-1.175$ & $ -- $  \\
 \hline\hline
 $\lim_{n\rightarrow \infty} \mE \xi \sqrt{n} $    & $\mathbf{2.25}$  & \bl{$\mathbf{2.20}$}  
   & $\mathbf{1.187}$ & \bl{$\mathbf{1.174}$} & $\mathbf{0.484}$ & \bl{$\mathbf{0.483}$} 
   \\ \hline\hline
  \end{tabular}
%\end{adjustwidth}
\label{tab:tab1}
\end{table}

\begin{table}[h]
\caption{LASP -- Theory (RDT, Lifted RDT) versus Simulation (LAS) ; $m=n=3000$; Lower $\beta$ regime ($ \beta=\lim_{n\rightarrow \infty}\frac{k}{n}$) }\vspace{.1in}
%\begin{adjustwidth}{-1.4cm}{}
\centering
\def\arraystretch{1.2}
\begin{tabular}{||c||c|c|c||c|c|c||}\hline\hline
$\beta$  &   \multicolumn{3}{c||}{$ 0.1 $} &  \multicolumn{3}{c||}{$ 0.2 $}   \\
  \hline\hline 
Theory vs Simulation  & \multicolumn{2}{c|}{\textbf{Theory}}
& \multicolumn{1}{c||}{\bl{\textbf{Sim.}}}
  & \multicolumn{2}{c|}{\textbf{Theory}}
& \multicolumn{1}{c||}{\bl{\textbf{Sim.}}}
   \\
  \hline 
Methodology   &  RDT &   Lifted RDT &   LAS &  
 RDT &   Lifted RDT &   LAS  
 \\
\hline\hline
$\nu$ & $1.282$ & $1.810$ & $ -- $ 
 & $0.842$ & $1.028$ & $ -- $  \\
\hline
$c_3$ & $ \rightarrow 0$  & $0.610$ & $ -- $ 
& $ \rightarrow 0$  & $0.255$ & $ -- $ 
\\
\hline\hline
 $\lim_{n\rightarrow \infty} \mE \xi \sqrt{n} $    & $\mathbf{11.099}$  & $\mathbf{10.677}$ & \bl{$\mathbf{9.57}$}  
   & $\mathbf{6.260}$  & $\mathbf{6.239}$ & \bl{$\mathbf{5.71}$} 
   \\ \hline\hline
  \end{tabular}
%\end{adjustwidth}
\label{tab:tab2}
\end{table}

%%%%%%%%%%%%%%%%%%%%%%%%%%%%%%%%%%%%%%%%%%%%%%%%%%%%%%%%%%%%%%%%%%%%%%%%%%%%%%%%
%%%%%%%%%%%%%%%%%%%%%%%%%%%%%%%%%%%%%%%%%%%%%%%%%%%%%%%%%%%%%%%%%%%%%%%%%%%%%%%%
%%%%%%%%%%%%%%%%%%%%%%%%%%%%%%%%%%%%%%%%%%%%%%%%%%%%%%%%%%%%%%%%%%%%%%%%%%%%%%%%
%%%%%%%%%%%%%%%%%%%%%%%%%%%%%%%%%%%%%%%%%%%%%%%%%%%%%%%%%%%%%%%%%%%%%%%%%%%%%%%%
%%%%%%%%%%%%%%%%%%%%%%%%%%%%%%%%%%%%%%%%%%%%%%%%%%%%%%%%%%%%%%%%%%%%%%%%%%%%%%%%
\section{Conclusion}
\label{sec:conc}
%%%%%%%%%%%%%%%%%%%%%%%%%%%%%%%%%%%%%%%%%%%%%%%%%%%%%%%%%%%%%%%%%%%%%%%%%%%%%%%%
%%%%%%%%%%%%%%%%%%%%%%%%%%%%%%%%%%%%%%%%%%%%%%%%%%%%%%%%%%%%%%%%%%%%%%%%%%%%%%%%
%%%%%%%%%%%%%%%%%%%%%%%%%%%%%%%%%%%%%%%%%%%%%%%%%%%%%%%%%%%%%%%%%%%%%%%%%%%%%%%%
%%%%%%%%%%%%%%%%%%%%%%%%%%%%%%%%%%%%%%%%%%%%%%%%%%%%%%%%%%%%%%%%%%%%%%%%%%%%%%%%
%%%%%%%%%%%%%%%%%%%%%%%%%%%%%%%%%%%%%%%%%%%%%%%%%%%%%%%%%%%%%%%%%%%%%%%%%%%%%%%%

We have studied the largest average submatrix problem (LASP), focusing on the linear regime where submatrix dimensions are linear functions of the original matrix dimensions. While sublinear regimes have been thoroughly studied, the linear regime has lacked mathematically rigorous results supporting the presence or absence of a statistical-computational gap (SCG).

To address this, we developed a framework based on Random Duality Theory (RDT) to study LASP. Using both plain and lifted RDT variants, we obtained closed-form upper bounds on the LASP objectives. We found that lifted RDT strictly improves upon plain RDT within a certain range of submatrix dimensions. Furthermore, for small submatrices in the linear regime, we proved that our results match the replica predictions from \cite{ErbaKOZ24} obtained with one-step replica symmetry breaking ansatz (and their extrapolated sublinear analogues match \cite{BhamidiDN17,GamarnikLi18}).

Additionally, implementation of a simple LAS algorithm  \cite{ShabalinWPN09} uncovered that its performance closely approaches our theoretical predictions already for matrix sizes on the order of one thousand. For a significant portion of the submatrix dimension range, the theoretical bounds and algorithmic performance differ by no more than $~0.1\%$. This suggests that (at least in such dimensions range) the SCG is either completely absent or practically insignificant.

%%%%%%%%%%%%%%%%%%%%%%%%%%%%%%%%%%%%%%%%%%%%%%%%%%%%%%%%%%%%%%%%%%%%%%%%%%%%%%%%
%%%%%%%%%%%%%%%%%%%%%%%%%%%%%%%%%%%%%%%%%%%%%%%%%%%%%%%%%%%%%%%%%%%%%%%%%%%%%%%%
%\section*{Acknowledgment}
%%%%%%%%%%%%%%%%%%%%%%%%%%%%%%%%%%%%%%%%%%%%%%%%%%%%%%%%%%%%%%%%%%%%%%%%%%%%%%%%
%%%%%%%%%%%%%%%%%%%%%%%%%%%%%%%%%%%%%%%%%%%%%%%%%%%%%%%%%%%%%%%%%%%%%%%%%%%%%%%%

%%%%%%%%%%%%%%%%%%%%%%%%%%%%%%%%%%%%%%%%%%%%%%%%%%%%%%%%%%%%%%%%%%%%%%%%%%%%%%%%%
%%%%%%%%%%%%%%%%%%%%%%%%%%%%%%%%%%%%%%%%%%%%%%%%%%%%%%%%%%%%%%%%%%%%%%%%%%%%%%%%%
%\section*{Data availability}
%%%%%%%%%%%%%%%%%%%%%%%%%%%%%%%%%%%%%%%%%%%%%%%%%%%%%%%%%%%%%%%%%%%%%%%%%%%%%%%%%
%%%%%%%%%%%%%%%%%%%%%%%%%%%%%%%%%%%%%%%%%%%%%%%%%%%%%%%%%%%%%%%%%%%%%%%%%%%%%%%%%
%
%No data besides those given in the paper are used.
%
% 
%
%
%%%%%%%%%%%%%%%%%%%%%%%%%%%%%%%%%%%%%%%%%%%%%%%%%%%%%%%%%%%%%%%%%%%%%%%%%%%%%%%%%
%%%%%%%%%%%%%%%%%%%%%%%%%%%%%%%%%%%%%%%%%%%%%%%%%%%%%%%%%%%%%%%%%%%%%%%%%%%%%%%%%
%\section*{Competing interests}
%%%%%%%%%%%%%%%%%%%%%%%%%%%%%%%%%%%%%%%%%%%%%%%%%%%%%%%%%%%%%%%%%%%%%%%%%%%%%%%%%
%%%%%%%%%%%%%%%%%%%%%%%%%%%%%%%%%%%%%%%%%%%%%%%%%%%%%%%%%%%%%%%%%%%%%%%%%%%%%%%%%
%
%The authors have no competing interests to declare that are relevant to the content of this article.
%

%\newpage1
%\setcounter{page}{1}
\begin{singlespace}
\bibliographystyle{plain}
\bibliography{nflgscompyxRefs}

\begin{thebibliography}{100}

\bibitem{AbbLiSly21b}
E.~Abbe, S.~Li, and A.~Sly.
\newblock Proof of the contiguity conjecture and lognormal limit for the
  symmetric perceptron.
\newblock In {\em 62nd {IEEE} Annual Symposium on Foundations of Computer
  Science, {FOCS} 2021, Denver, CO, USA, February 7-10, 2022}, pages 327--338.
  {IEEE}, 2021.

\bibitem{AchlioptasR06}
D.~Achlioptas and F.~Ricci{-}Tersenghi.
\newblock On the solution-space geometry of random constraint satisfaction
  problems.
\newblock In {\em Proceedings of the 38th Annual {ACM} Symposium on Theory of
  Computing, Seattle, WA, USA, May 21-23, 2006}, pages 130--139. {ACM}, 2006.

\bibitem{ElAlGam24}
A.~El Alaoui and D.~Gamarnik.
\newblock Hardness of sampling solutions from the symmetric binary perceptron.
\newblock {\em Random Struct. Algorithms}, 66(4), 2025.

\bibitem{AlaouiMS21}
A.~El Alaoui, A.~Montanari, and M.~Sellke.
\newblock Optimization of mean-field spin glasses.
\newblock {\em The Annals of Probability}, 49(6), 2021.

\bibitem{AlaSel20}
A.~El Alaoui and M.~Sellke.
\newblock Algorithmic pure states for the negative spherical perceptron.
\newblock {\em Journal of Statistical Physics}, 189(27), 2022.

\bibitem{AMZ24}
B.~L. Annesi, E.~M. Malatesta, and F.~Zamponi.
\newblock {Exact full-RSB SAT/UNSAT transition in infinitely wide two-layer
  neural networks}.
\newblock {\em SciPost Phys.}, 18:118, 2025.

\bibitem{AubPerZde19}
B.~Aubin, W.~Perkins, and L.~Zdeborova.
\newblock Storage capacity in symmetric binary perceptrons.
\newblock {\em J. Phys. A}, 52(29):294003, 2019.

\bibitem{BBP05}
J.~Baik, G.~Ben~Arous, and S.~Peche.
\newblock Phase transition of the largest eigenvalue for non-null complex
  sample covariance matrices.
\newblock {\em The Annals of Probability}, 33(5):1643--1697, 2005.

\bibitem{Bald15}
C.~Baldassi, A.~Ingrosso, C.~Lucibello, L.~Saglietti, and R.~Zecchina.
\newblock Subdominant dense clusters allow for simple learning and high
  computational performance in neural networks with discrete synapses.
\newblock {\em Physical Review letters}, 115(12):128101, 2015.

\bibitem{BMPZ23}
C.~Baldassi, E.~M. Malatesta, G.~Perugini, and R.~Zecchina.
\newblock Typical and atypical solutions in nonconvex neural networks with
  discrete and continuous weights.
\newblock {\em Phys. Rev. E}, 108:024310, Aug 2023.

\bibitem{Bald20}
C.~Baldassi, R.~D. Vecchia, C.~Lucibello, and R.~Zecchina.
\newblock Clustering of solutions in the symmetric binary perceptron.
\newblock {\em Journal of Statistical Mechanics: Theory and Experiment},
  (7):073303, 2020.

\bibitem{BanksMVVX18}
J.~Banks, C.~Moore, R.~Vershynin, N.~Verzelen, and J.~Xu.
\newblock Information-theoretic bounds and phase transitions in clustering,
  sparse pca, and submatrix localization.
\newblock {\em IEEE Transactions on Information Theory}, 64(7):4872--4894,
  2018.

\bibitem{BanSpen20}
N.~Bansal and J.~H Spencer.
\newblock On-line balancing of random inputs.
\newblock {\em Random Structures \& Algorithms}, 57(4):879--891, 2020.

\bibitem{BarbAKZ23}
D.~Barbier, A.~El Alaoui, F.~Krzakala, and L.~Zdeborova.
\newblock On the atypical solutions of the symmetric binary perceptron.
\newblock {\em Journal of Physics A: Mathematical and Theoretical},
  57(19):195202, 2024.

\bibitem{BarbierMKLZ16}
J.~Barbier, M.~Dia, N.~Macris, F.~Krzakala, T.~Lesieur, and L.~Zdeborova.
\newblock Mutual information for symmetric rank-one matrix estimation: A proof
  of the replica formula.
\newblock In {\em Advances in Neural Information Processing Systems (NIPS
  2016)}, pages 424--432, 2016.

\bibitem{BarbierKMMZ18}
J.~Barbier, F.~Krzakala, N.~Macris, L.~Miolane, and L.~Zdeborov{\'{a}}.
\newblock Optimal errors and phase transitions in high-dimensional generalized
  linear models.
\newblock In {\em Conference On Learning Theory, {COLT} 2018, Stockholm,
  Sweden, 6-9 July 2018}, volume~75 of {\em Proceedings of Machine Learning
  Research}, pages 728--731. {PMLR}, 2018.
\newblock available online at \bl{\url{http://arxiv.org/abs/1708.03395}}.

\bibitem{BehneReeves22}
J.~K. Behne and G.~Reeves.
\newblock Fundamental limits for rank-one matrix estimation with groupwise
  heteroskedasticity.
\newblock In {\em Proceedings of The 25th International Conference on
  Artificial Intelligence and Statistics}, volume 151 of {\em Proceedings of
  Machine Learning Research}, pages 8650--8672. PMLR, 2022.

\bibitem{BhamidiDN17}
S.~Bhamidi, P.~S. Dey, and A.~B. Nobel.
\newblock Energy landscape for large average submatrix detection problems in
  gaussian random matrices.
\newblock {\em Probability Theory and Related Fields}, 168(3-4):919--971, 2017.

\bibitem{BhaGG26}
S.~Bhamidi, D.~Gamarnik, and S.~Gong.
\newblock Finding a dense submatrix of a random matrix. sharp bounds for online
  algorithms.
\newblock {\em Electron. Commun. Probab.}, 31, 2026.

\bibitem{BoltNakSunXu22}
E.~Bolthausen, S.~Nakajima, N.~Sun, and C.~Xu.
\newblock Gardner formula for {I}sing perceptron models at small densities.
\newblock {\em {P}roceedings of {T}hirty {F}ifth {C}onference on {L}earning
  {T}heory, {PMLR}}, 178:1787--1911, 2022.

\bibitem{BrZech06}
A.~Braunstein and R.~Zecchina.
\newblock Learning by message passing in networks of discrete synapses.
\newblock {\em Physical review letters}, 96(3):030201, 2006.

\bibitem{BrennanBH19}
M.~Brennan, G.~Bresler, and W.~Huleihel.
\newblock Universality of computational lower bounds for submatrix detection.
\newblock In {\em Proceedings of Thirty Second Conference on Learning Theory},
  volume~99 of {\em Proceedings of Machine Learning Research}, pages 417--468,
  25--28 Jun 2019.

\bibitem{BreHuang21}
G.~Bresler and B.~Huang.
\newblock The algorithmic phase transition of random k-{SAT} for low degree
  polynomials.
\newblock In {\em 62th {IEEE} Annual Symposium on Foundations of Computer
  Science, {FOCS} 2021}, pages 298--309. {IEEE}, 2021.

\bibitem{ButIng13}
C.~Butucea and Y.~I. Ingster.
\newblock Detection of a sparse submatrix of a high-dimensional noisy matrix.
\newblock {\em Bernoulli}, 19(5B):2652--2688, 2013.

\bibitem{CaiLR17}
T.~T. Cai, T.~Liang, and A.~Rakhlin.
\newblock Computational and statistical boundaries for submatrix localization
  in a large noisy matrix.
\newblock {\em The Annals of Statistics}, 45(4):1403--1456, 2017.

\bibitem{CGPR19}
W.-K. Chen, D.~Gamarnik, D.~Panchenko, and M.~Rahman.
\newblock Suboptimality of local algorithms for a class of max-cut problems.
\newblock {\em The Annals of Probability}, 47(3):1587--1618, 2019.

\bibitem{Cover65}
T.~Cover.
\newblock Geomretrical and statistical properties of systems of linear
  inequalities with applications in pattern recognition.
\newblock {\em IEEE Transactions on Electronic Computers}, (EC-14):326--334,
  1965.

\bibitem{DadonHB24}
M.~Dadon, W.~Huleihel, and T.~Bendory.
\newblock Detection and recovery of hidden submatrices.
\newblock {\em {IEEE} Trans. Signal Inf. Process. over Networks}, 10:69--82,
  2024.

\bibitem{Dandietal25}
Y.~Dandi, D.~Gamarnik, F.~Pernice, and L.~Zdeborova.
\newblock Sequential dynamics in {I}sing spin glasses.
\newblock 2025.
\newblock available online at \bl{\url{http://arxiv.org/abs/2506.09877}}.

\bibitem{DeshMont14}
Y.~Deshpande and A.~Montanari.
\newblock Information-theoretically optimal sparse pca.
\newblock In {\em 2014 IEEE International Symposium on Information Theory},
  pages 2197--2201. IEEE, 2014.

\bibitem{DinSlySun15}
J.~Ding, A.~Sly, and N.~Sun.
\newblock Satisfiability {T}hreshold for {R}andom {R}egular {NAE-SAT}.
\newblock {\em Communications in Mathematical Physics}, 341(2):435--489, 2015.

\bibitem{DingSun19}
J.~Ding and N.~Sun.
\newblock Capacity lower bound for the {I}sing perceptron.
\newblock {\em {STOC} 2019: Proceedings of the 51st Annual {ACM SIGACT}
  {S}ymposium on {T}heory of {C}omputing}, pages 816--827, 2019.

\bibitem{DonGavJohn18}
D.~L. Donoho, M.~Gavish, and I.~M. Johnstone.
\newblock Optimal shrinkage of eigenvalues in the spiked covariance model.
\newblock {\em The Annals of Statistics}, 46(4):1742--1778, 2018.

\bibitem{ErbaKOZ24}
V.~Erba, F.~Krzakala, R.~P. Ortiz, and L.~Zdeborova.
\newblock Statistical mechanics of the maximum-average submatrix problem.
\newblock {\em Journal of Statistical Mechanics: Theory and Experiment},
  1:013403, 2024.

\bibitem{ErbaKOZ26}
V.~Erba, N.~M. Kupferschmid, R.~P. Ortiz, and L.~Zdeborova.
\newblock The maximum-average subtensor problem: equilibrium and
  out-of-equilibrium properties.
\newblock {\em SciPost Phys.}, 20:073, 2026.

\bibitem{Fort10}
S.~Fortunato.
\newblock Community detection in graphs.
\newblock {\em Physics Reports}, 486:75--174, 2010.

\bibitem{FraPar16}
S.~Franz and G.~Parisi.
\newblock The simplest model of jamming.
\newblock {\em Journal of Physics A: Mathematical and Theoretical},
  49(14):145001, 2016.

\bibitem{FPSUZ17}
S.~Franz, G.~Parisi, M.~Sevelev, P.~Urbani, and F.~Zamponi.
\newblock Universality of the {SAT-UNSAT} (jamming) threshold in non-convex
  continuous constraint satisfaction problems.
\newblock {\em SciPost Physics}, 2:019, 2017.

\bibitem{Gamar21}
D.~Gamarnik.
\newblock The overlap gap property: A topological barrier to optimizing over
  random structures.
\newblock {\em Proceedings of the National Academy of Sciences}, 118(41), 2021.

\bibitem{GamJag21}
D.~Gamarnik and A.~Jagannath.
\newblock The overlap gap property and approximate message passing algorithms
  for p-spin models.
\newblock {\em Ann. Probab.}, 49:180 -- 205, 2021.

\bibitem{GamarnikJS21}
D.~Gamarnik, A.~Jagannath, and S.~Sen.
\newblock The overlap gap property in principal submatrix recovery.
\newblock {\em Probability Theory and Related Fields}, 181(4):797--837, 2021.

\bibitem{GamAW24}
D.~Gamarnik, A.~Jagannath, and A.~S. Wein.
\newblock Hardness of random optimization problems for {B}oolean circuits,
  low-degree polynomials, and {L}angevin dynamics.
\newblock {\em {SIAM} J. Comput.}, 53(1):1--46, 2024.

\bibitem{GamKizPerXu22}
D.~Gamarnik, E.~C. Kizildag, W.~Perkins, and C.~Xu.
\newblock Algorithms and barriers in the symmetric binary perceptron model.
\newblock In {\em 63rd {IEEE} Annual Symposium on Foundations of Computer
  Science, {FOCS} 2022, Denver, CO, USA, October 31 - November 3, 2022}, pages
  576--587. {IEEE}, 2022.

\bibitem{GamKW25}
D.~Gamarnik, E.~C. Kizildag, and L.~Warnke.
\newblock Optimal hardness of online algorithms for large independent sets.
\newblock 2025.
\newblock available online at \bl{\url{http://arxiv.org/abs/2504.11450}}.

\bibitem{GamarnikLi18}
D.~Gamarnik and Q.~Li.
\newblock Finding a large submatrix of a gaussian random matrix.
\newblock {\em The Annals of Statistics}, 46(6A):2511--2561, 2018.

\bibitem{GamMZ22}
D.~Gamarnik, C.~Moore, and L.~Zdeborova.
\newblock Disordered systems insights on computational hardness.
\newblock {\em Journal of Statistical Mechanics: Theory and Experiment},
  (11):115015, 2022.

\bibitem{GamarSud17}
D.~Gamarnik and M.~Sudan.
\newblock Limits of local algorithms over sparse random graphs.
\newblock {\em Ann. Probab.}, 45(4):2353--2376, 2017.

\bibitem{GamarSud17a}
D.~Gamarnik and M.~Sudan.
\newblock Performance of sequential local algorithms for the random {NAE-K-SAT}
  problem.
\newblock {\em SIAM Journal on Computing}, 46(2):590--619, 2017.

\bibitem{Gar88}
E.~Gardner.
\newblock The space of interactions in neural networks models.
\newblock {\em J. Phys. A: Math. Gen.}, 21:257--270, 1988.

\bibitem{GarDer88}
E.~Gardner and B.~Derrida.
\newblock Optimal storage properties of neural networks models.
\newblock {\em J. Phys. A: Math. Gen.}, 21:271--284, 1988.

\bibitem{GongHLS26}
S.~Gong, B.~Huang, S.~Li, and M.~Sellke.
\newblock Stable algorithms cannot reliably find isolated perceptron solutions.
\newblock 2026.
\newblock available online at \bl{\url{http://arxiv.org/abs/2604.00328}}.

\bibitem{Gordon85}
Y.~Gordon.
\newblock Some inequalities for {G}aussian processes and applications.
\newblock {\em Israel Journal of Mathematics}, 50(4):265--289, 1985.

\bibitem{HajekWX18}
B.~Hajek, Y.~Wu, and J.~Xu.
\newblock Submatrix localization via message passing.
\newblock {\em Journal of Machine Learning Research}, 18(186):1--52, 2018.

\bibitem{Huang24}
B.~Huang.
\newblock Capacity threshold for the {I}sing perceptron.
\newblock In {\em 65th {IEEE} Annual Symposium on Foundations of Computer
  Science, {FOCS} 2024, Chicago, IL, USA, October 27-30, 2024}, pages
  1126--1136. {IEEE}, 2024.

\bibitem{HuangSell24}
B.~Huang and M.~Sellke.
\newblock Optimization algorithms for multi-species spherical spin glasses.
\newblock {\em J. Stat. Phys.}, 191, 2024.

\bibitem{HuangS22a}
B.~Huang and M.~Sellke.
\newblock Tight {L}ipschitz hardness for optimizing mean field spin glasses.
\newblock {\em Comm. Pure. Appl. Math.}, 78(1):60--119, 2025.

\bibitem{KimRoc98}
J.~H. Kim and J.~R. Roche.
\newblock Covering cubes by random half cubes with applications to biniary
  neural networks.
\newblock {\em Journal of Computer and System Sciences}, 56:223--252, 1998.

\bibitem{KolarNRS11}
M.~Kolar, S.~Balakrishnan, A.~Rinaldo, and A.~Singh.
\newblock Minimax localization of structural information in large noisy
  matrices.
\newblock In {\em Advances in Neural Information Processing Systems 24 (NIPS
  2011)}, pages 909--917, 2011.

\bibitem{KraMez89}
W.~Krauth and M.~Mezard.
\newblock Storage capacity of memory networks with binary couplings.
\newblock {\em J. Phys. France}, 50:3057--3066, 1989.

\bibitem{LelargeMio18}
M.~Lelarge and L.~Miolane.
\newblock Fundamental limits of symmetric low-rank matrix estimation.
\newblock {\em Probability Theory and Related Fields}, 171(1-2):211--244, 2018.

\bibitem{Lesetal17}
T.~Lesieur, L.~Miolane, M.~Lelarge, F.~Krzakala, and L.~Zdeborová.
\newblock Statistical and computational phase transitions in spiked tensor
  estimation.
\newblock In {\em 2017 IEEE International Symposium on Information Theory
  (ISIT)}, pages 511--515, 2017.

\bibitem{LiSch24}
S.~Li and T.~Schramm.
\newblock Some easy optimization problems have the overlap-gap property.
\newblock In {\em The Thirty Eighth Annual Conference on Learning Theory, 30-4
  July 2025, Lyon, France}, volume 291 of {\em Proceedings of Machine Learning
  Research}, pages 3582--3622. {PMLR}, 2025.

\bibitem{MaWu15}
Z.~Ma and Y.~Wu.
\newblock Computational barriers in minimax submatrix detection.
\newblock {\em The Annals of Statistics}, 43(3):1089--1116, 2015.

\bibitem{MadOli04}
S.~C. Madeira and A.~L. Oliveira.
\newblock Biclustering algorithms for biological data analysis: a survey.
\newblock {\em IEEE/ACM Transactions on Computational Biology and
  Bioinformatics}, 1(1):24--45, 2004.

\bibitem{MMZ05}
M.~Mezard, T.~Mora, and R.~Zecchina.
\newblock Clustering of solutions in the random satisfiability problem.
\newblock {\em Physical Review Letters}, 94:197204, 2005.

\bibitem{MezardPZ02}
M.~Mezard, G.~Parisi, and R.~Zecchina.
\newblock Analytic and algorithmic solution of random satisfiability problems.
\newblock {\em Science}, 297(5582):812--815, 2002.

\bibitem{Mont15}
A.~Montanari.
\newblock Finding one community in a sparse graph.
\newblock {\em Journal of Statistical Physics}, 161:273--299, 2015.

\bibitem{Montanari19}
A.~Montanari.
\newblock Optimization of the {S}herrington-{K}irkpatrick hamiltonian.
\newblock In {\em 60th {IEEE} Annual Symposium on Foundations of Computer
  Science, {FOCS} 2019, Baltimore, Maryland, USA, November 9-12, 2019}, pages
  1417--1433. {IEEE} Computer Society, 2019.

\bibitem{MontRich14}
A.~Montanari and E.~Richard.
\newblock A statistical model for tensor {PCA}.
\newblock {\em Advances in Neural Information Processing Systems}, 27, 2014.

\bibitem{NakSun23}
S.~Nakajima and N.~Sun.
\newblock Sharp threshold sequence and universality for {I}sing perceptron
  models.
\newblock {\em {P}roceedings of the 2023 {A}nnual {ACM-SIAM} {S}ymposium on
  {D}iscrete {A}lgorithms ({SODA})}, pages 638--674, 2023.

\bibitem{OrenJBT26}
M.~Oren-Loberman, D.~Jerbi, T.~Bendory, and W.~Huleihel.
\newblock Inhomogeneous submatrix detection.
\newblock {\em arXiv preprint arXiv:2603.09602}, 2026.

\bibitem{PakKK23}
A.~Pak, J.~Ko, and F.~Krzakala.
\newblock Optimal algorithms for the inhomogeneous spiked {W}igner model.
\newblock In {\em Advances in Neural Information Processing Systems},
  volume~36, pages 5557--5586, 2023.

\bibitem{PerkXu21}
W.~Perkins and C.~Xu.
\newblock Frozen 1-{RSB} structure of the symmetric {I}sing perceptron.
\newblock {\em {STOC} 2021: Proceedings of the 53rd Annual {ACM SIGACT}
  {S}ymposium on {T}heory of {C}omputing}, pages 1579--1588, 2021.

\bibitem{PerryWB20}
A.~Perry, A.~S. Wein, and A.~S. Bandeira.
\newblock Statistical limits of spiked tensor models.
\newblock {\em Annales de l'Institut Henri Poincare, Probabilites et
  Statistiques}, 56(1):238--295, 2020.

\bibitem{PonGAR15}
B.~Pontes, R.~Giraldez, and J.~S. Aguilar-Ruiz.
\newblock Biclustering on expression data: A review.
\newblock {\em Journal of biomedical informatics}, 57:163--180, 2015.

\bibitem{HegKiz25}
A.~Hegade~K. R. and E.~C. Kizildag.
\newblock Large average subtensor problem: Ground-state, algorithms, and
  algorithmic barriers.
\newblock 2025.
\newblock available online at \bl{\url{http://arxiv.org/abs/2506.17118}}.

\bibitem{RahVir17}
M.~Rahman and B.~Virag.
\newblock Local algorithms for independent sets are half-optimal.
\newblock {\em The Annals of Probability}, 45(3):1543--1577, 2017.

\bibitem{ShabalinWPN09}
A.~A. Shabalin, V.~J. Weigman, C.~M. Perou, and A.~B. Nobel.
\newblock Finding large average submatrices in high dimensional data.
\newblock {\em The Annals of Applied Statistics}, 3(3):985--1012, 2009.

\bibitem{SchTir03}
M.~Shcherbina and B.~Tirozzi.
\newblock Rigorous solution of the {G}ardner problem.
\newblock {\em Comm. on Math. Physics}, (234):383--422, 2003.

\bibitem{Slep62}
D.~Slepian.
\newblock The one sided barier problem for {G}aussian noise.
\newblock {\em Bell System Tech. Journal}, 41:463--501, 1962.

\bibitem{SohnWein25}
Y.~Sohn and A.~S. Wein.
\newblock Sharp phase transitions in estimation with low-degree polynomials.
\newblock In {\em Proceedings of the 57th Annual ACM Symposium on Theory of
  Computing (STOC)}, pages 891--902, 2025.

\bibitem{Spen85}
J.~Spencer.
\newblock Six standard deviations suffice.
\newblock {\em Transactions of the American mathematical society},
  289(2):679--706, 1985.

\bibitem{StojnicCSetam09}
M.~Stojnic.
\newblock Various thresholds for $\ell_1$-optimization in compressed sensing.
\newblock 2009.
\newblock available online at \bl{\url{http://arxiv.org/abs/0907.3666}}.

\bibitem{StojnicICASSP10var}
M.~Stojnic.
\newblock $\ell_1$ optimization and its various thresholds in compressed
  sensing.
\newblock {\em ICASSP, IEEE International Conference on Acoustics, Signal and
  Speech Processing}, pages 3910--3913, 14-19 March 2010.
\newblock Dallas, TX.

\bibitem{StojnicISIT2010binary}
M.~Stojnic.
\newblock Recovery thresholds for $\ell_1$ optimization in binary compressed
  sensing.
\newblock {\em ISIT, IEEE International Symposium on Information Theory}, pages
  1593 -- 1597, 13-18 June 2010.
\newblock Austin, TX.

\bibitem{StojnicGardGen13}
M.~Stojnic.
\newblock Another look at the {G}ardner problem.
\newblock 2013.
\newblock available online at \bl{\url{http://arxiv.org/abs/1306.3979}}.

\bibitem{StojnicGardSphNeg13}
M.~Stojnic.
\newblock Negative spherical perceptron.
\newblock 2013.
\newblock available online at \bl{\url{http://arxiv.org/abs/1306.3980}}.

\bibitem{StojnicRegRndDlt10}
M.~Stojnic.
\newblock Regularly random duality.
\newblock 2013.
\newblock available online at \bl{\url{http://arxiv.org/abs/1303.7295}}.

\bibitem{Stojnicgscompyx16}
M.~Stojnic.
\newblock Fully bilinear generic and lifted random processes comparisons.
\newblock 2016.
\newblock available online at \bl{\url{http://arxiv.org/abs/1612.08516}}.

\bibitem{Stojnicgscomp16}
M.~Stojnic.
\newblock Generic and lifted probabilistic comparisons -- max replaces minmax.
\newblock 2016.
\newblock available online at \bl{\url{http://arxiv.org/abs/1612.08506}}.

\bibitem{Stojnicclupsk25}
M.~Stojnic.
\newblock A {CLuP} algorithm to practically achieve $\sim 0.76$ {SK}--model
  ground state free energy.
\newblock {\em Journal of Statistical Mechanics: Theory and Experiment},
  (11):123302, 2025.

\bibitem{Stojniccluphop25}
M.~Stojnic.
\newblock {CLuP} practically achieves $\sim 1.77$ positive and $\sim 0.33$
  negative {H}opfield model ground state free energy.
\newblock 2025.
\newblock available online at \bl{\url{http://arxiv.org/abs/2507.22396}}.

\bibitem{Stojnicalgbp25}
M.~Stojnic.
\newblock Binary perceptron computational gap -- a parametric fl-{RDT} view.
\newblock {\em Journal of Statistical Mechanics: Theory and Experiment},
  (4):043301, 2026.

\bibitem{Stojnictcmspnncaprdt23}
M.~Stojnic.
\newblock Capacity of the treelike sign perceptrons neural networks with one
  hidden layer -- rdt based upper bounds.
\newblock {\em IEEE Transactions on Information Theory}, 2026.
\newblock to appear (\bl{\url{http://arxiv.org/abs/2312.08244}}).

\bibitem{Stojnictcmspnncapliftedrdt23}
M.~Stojnic.
\newblock {\emph{Lifted}} {RDT} based capacity analysis of the 1-hidden layer
  treelike \emph{sign} perceptrons neural networks.
\newblock {\em Information and Inference: A Journal of the IMA}, 15(3):iaag034,
  09 2026.

\bibitem{Stojnicalgsbp26}
M.~Stojnic.
\newblock Parametric {RDT} approach to computational gap of symmetric binary
  perceptron.
\newblock {\em Journal of Statistical Physics}, 2026.
\newblock to appear. (\bl{\url{http://arxiv.org/abs/2601.10628}}).

\bibitem{StojnicNN27}
M.~Stojnic.
\newblock Exact capacity of the wide hidden layer treelike neural networks with
  generic activations.
\newblock {\em Neural Networks}, 205:109458, January 2027.

\bibitem{SunNobel13}
X.~Sun and A.~B. Nobel.
\newblock On the maximal size of large-average and anova-fit submatrices in a
  gaussian random matrix.
\newblock {\em Bernoulli}, 19(1):275--294, 2013.

\bibitem{Talbook11b}
M.~Talagrand.
\newblock {\em Mean field models and spin glasse: {V}olume {II}}.
\newblock A series of modern surveys in mathematics 55, Springer-Verlag, Berlin
  Heidelberg, 2011.

\bibitem{Talbook11a}
M.~Talagrand.
\newblock {\em Mean field models and spin glasses: {V}olume {I}}.
\newblock A series of modern surveys in mathematics 54, Springer-Verlag, Berlin
  Heidelberg, 2011.

\bibitem{Wein22}
A.~S Wein.
\newblock Optimal low-degree hardness of maximum independent set.
\newblock {\em Mathematical Statistics and Learning}, 4(3):221--251, 2022.

\bibitem{Wendel62}
J.~G. Wendel.
\newblock A problem in geometric probability.
\newblock {\em Mathematica Scandinavica}, 1:109--111, 1962.

\end{thebibliography}
\end{singlespace}

\end{document}